\newtheorem{theorem}{Theorem}[section]
\newtheorem{proposition}[theorem]{Proposition}
\newtheorem{remark}[theorem]{Remark}
\newcommand{\EE}{\mathbb{E}}
\newcommand{\PP}{\mathbb{P}}
\newcommand{\tT}{\mathbb{T}}
\newcommand{\ZZ}{\mathbb{Z}}
\newcommand{\A}{{\cal A}}
\newcommand{\E}{{\cal E}}
\newcommand{\K}{{\cal K}}
\newcommand{\I}{{\cal I}}
\newcommand{\xP}{P}
\newcommand{\aB}{{\mathbb{B}}}
\newcommand{\aD}{{\mathbb{D}}}
\newcommand{\aY}{{\mathbb{Y}}}
\newcommand{\aG}{{\mathbb{G}}}
\newcommand{\aV}{{\mathbb{V}}}
\newcommand{\aP}{{\bf P}}
\newcommand{\hS}{{\mathbb{S}}}
\newcommand{\Ta}{{\cal T}}
\newcommand{\aTa}{{\bf{\Ta}}}
\newcommand{\yX}{{\cal X}}
\newcommand{\xX}{X}
\newcommand{\axX}{{\mathbb{X}}}
\newcommand{\oth}{{\overline{\theta}}}
\newcommand{\hP}{P}
\newcommand{\tpi}{{\widetilde{\pi}}}
\begin{document}

\title{\itshape Entropy of absorbed Markov chains}

\author{Servet Mart\'inez}

\maketitle

\begin{abstract}
We consider a strictly substochastic matrix or
an stochastic matrix with absorbing states.
By using quasi-stationary distributions 
one shows there is a canonical associated stationary 
Markov chain.
Based upon $2-$stringing representation of the 
resurrected chain we supply a stationary representation 
of the killed and the absorbed chains.
The entropies of these representations has a clear meaning 
when one identifies the probability measure of 
natural factors. 
The balance between the entropies of these 
representations and the canonical chain, serves to check
the suitability of the whole construction.
\end{abstract}

\bigskip

\noindent {\bf Keywords: $\,$} Markov chains; entropy;  
quasi-stationary distributions.  

\bigskip

\noindent {\bf AMS Subject Classification:\,} 60J10. 

\section{Introduction}
Our starting point is an irreducible strictly substochastic matrix $P_I$ 
matrix on a countable set $I$. It defines a killed Markov chain when 
adding a cemetery that is an absorbing state. One of our 
purposes is to explore 
how one can study the entropy of this chain. 

\medskip

The problem can be set for a Markov chain that
is absorbed in a class of states, which is not necessarily a singleton.
It is in this enlarged setting that we study the entropy.
In this study we use the following concepts: 

\smallskip

\noindent - the quasi-stationary distribution
(q.s.d.) of the matrix $P_I$. 
It exists and it is unique when $I$ is finite  
and in the general case we assume there exists some q.s.d.;

\smallskip 

\noindent - the Markov chain defined by resurrecting the
absorbed chain with the q.s.d.;

\smallskip

\noindent- the $2-$stringing of the resurrected Markov chain.

\medskip

In Proposition \ref{prop1} we show that every q.s.d. defines
a canonical stationary distribution associated to an absorbed chain.
A construction of this associated stationary chain is given 
in Proposition \ref{prop2} showing that it 
can be retrieved from the resurrected chain 
with some additional random elements: the killing on the orbit, the 
hitting of the absorbing stated, and a walk on the absorbing states. 

\medskip

The $2-$stringing of the resurrected Markov chain is used  
to supply stationary Markov representations of the killed and the 
absorbed Markov chains in a proper way, to compute their entropies 
and give a clear meaning to them. 
This is done in Sections \ref{ssk} and \ref{ssa} and in Propositions   
\ref{prop3} and \ref{prop30}. The entropies are interpreted by identifying 
the probability measure on the fibers of some natural factors. 
The entropy of the killed chain is the entropy of the resurrected
chain plus the entropy of being alive or killed,
and in the absorbed case one must add the entropy of the states
where they are absorbed. These additional terms are
given by the Abramov-Rokhlin formula on some factors.

\medskip

Finally, in Proposition \ref{prop4}
the entropy of the associated stationary chain is decomposed into the 
entropies of the absorbed chain 
and of the walk on the absorbing states. This is an
element which serves to the understanding of 
the stationary representation of the absorbed chain.

\medskip

We note that since the killed and the absorbed trajectory are finites, 
then almost all the orbits of the stationary representations of the
killed and the absorbed Markov
chains contain all the killed or absorbed trajectories.

\medskip

As usual one uses the capital letter $H$ for the entropy of a finite random
variable and $h$ is the entropy of an stationary chain. 

\medskip

Even if it is not usual, we use trajectory to refer to a visit of a 
finite sequence of states; and orbit for 
a bilateral sequence of states, that is for a point in a bilateral 
product space.

\section{Killed and Absorbed chains}
\label{sec0}
Let $P_I=(P(i,j): i,j\in I)$ be an irreducible strictly substochastic 
matrix on a countable set $I$. As usual one adds
a state $\partial\not\in I$ called a cemetery and the extension  
of $P_I$ to $I\cup \{\partial\}$ is noted by $\hP$, it satisfies 
$\hP(i, \partial)=1-\sum_{j\in I} P(i,j)$ for $i\in I$,
and $\hP(\partial, \partial)=1$. Strictly substochasticity is equivalent 
to $\sum_{i\in I}\hP(i,\partial)>0$.
By irreducibility the states in $I$ are transient and $\partial$
is absorbing. The process defined by $P_I$ is identified with the 
chain absorbed at a unique cemetery $\partial$.
  
\medskip

The existence of a unique cemetery models
the killing when this is a phenomenon common to all states, 
for instance in extinction where 
a unique $\partial$ has a clear meaning. But there 
can be several ways of being killed or hitting a boundary, 
and this is expressed by the existence of a set of 
absorbing states which is not necessary a singleton.

\medskip

So, we consider a more general situation. Let
$\hP=(\hP(a,b): a,b\in I\cup \E)$ be a stochastic 
matrix on the countable set $I\cup \E$ such that its 
restriction to $I$ is $P_I=(\hP(i,j):i,j\in I)$, 
all the states in $\E$ are absorbing,  
$\hP(\epsilon,\epsilon)=1$ for $\epsilon\in \E$, 
and $\E$ is attained 
from $I$, that is $\sum_{i\in I} \hP(i,\E)>0$. 
We retrieve the one point absorption when $\E=\{\partial\}$.

\medskip

Let $\yX=(\yX_n)$ be a Markov chain with transition matrix $\hP$,
it will be called absorbed chain.
By $\PP_a$ we mean the law of this chain when starting from
$a\in I\cup \E$ and $\EE_a$ denotes the associated mean 
expected value. Let
$$
\tau_\E=\inf\{n\ge 1: \yX_n\in \E\}
$$
be the first return time to $\E$. If $I$ is finite the hypotheses 
made on the chain imply $\PP_i(\tau_\E<\infty)=1$ for all $i\in I$.
In the countable case one assumes 
$\PP_i(\tau_\E<\infty)=1$ for all $i\in I$.

\medskip

We call $\yX^{(k)}=(\yX_n: 0\le n<\tau_\E)$ the killed trajectory
and $\yX^{(a)}=(\yX_n: 0\le n\le \tau_\E)$ the absorbed trajectory,
both starting from $\yX_0$.
The first one finishes when it is killed and the second one in the
state where it is absorbed.

\subsection{Quasi-stationary distributions}

A quasi-stationary distribution (q.s.d.)
$\mu=(\mu(i): i\in I)$ associated to $P_I$ is 
a probability measure $\mu$ on $I$ such that 
$$
\forall i\in I: \quad \PP_\mu(\yX_n=i \, | \, \tau_\E>n)=\mu(i).
$$
By writing this equality for $n=1$, one checks that the row vector
$\mu^t$ is a strictly positive left eigenvector of $P_I$ 
properly normalized (summing up-to $1$), with
eigenvalue $\gamma=\PP_\mu(\tau_\E>1)\in (0,1)$, that is
\begin{equation}
\label{equ1}
\mu^t P_I=\gamma \, \mu^t \hbox{ with }
\gamma=\sum_{i,j\in I}\mu(i)P(i,j)=\PP_\mu(\tau_\E>1).
\end{equation}
It follows that $\PP_\mu(\tau_\E>k)=\gamma^k$ for all $k\ge 0$.
So, if $\mu$ is a q.s.d. then the survival time 
is Geometric($1-\gamma$), see Lemma 2.2 in \cite{fmp}.
In the finite case there is a unique q.s.d. (see \cite{das}) 
and it corresponds to the
normalized left Perron-Frobenius eigenvector and $\gamma$ is the 
associated eigenvalue. 
The properties of q.s.d. depend on the killed trajectory
$\yX^{(k)}=(\yX_n: 0\le n<\tau_\E)$.
In the infinite case q.s.d.'s 
can exist or not (because the positive left eigenvectors 
can be of infinite mass), and when they exist there 
can be several q.s.d.
In the sequel we fix some q.s.d. $\mu$ 
which, as just discussed, exists in the finite case and in the 
infinite case one assumes its existence.

\medskip

Let us give some independence properties between the time of
killing and the absorption state. In Theorem 2.6 in \cite{cmsm}
it was stated the independence relation 
$\PP_\mu(\yX_n=i, \tau_\E>n)=\mu(i) \gamma^n$ for all
$i\in I$ and $n\ge 0$.
Let us prove that when starting from $\mu$ then the pair
$(\yX_{\tau_\E-1},\yX_{\tau_\E})$ consisting in the last visited
state before absorption and the absorption state,  
is independent of the random time $\tau_\E$. 
For $n\ge 1$, $i\in I$, $\epsilon\in \E$, one has
\begin{eqnarray*}
&{}&\PP_\mu(\yX_{\tau_\E}-1=i, \yX_{\tau_\E}=\epsilon, \tau_\E=n)=
\PP_\mu(\yX_{n-1}=i, \yX_n=\epsilon, \tau_\E=n)\\
&{}&\; =\PP_\mu(\yX_n=\epsilon \, | \, \yX_{n-1}=i)\PP_\mu(\yX_{n-1}=i, 
\tau_\E>n-1)\\
&{}&\; =\hP(i,\epsilon)\, \mu(i) \, \PP_\mu(\tau_\E>n-1)
= \hP(i,\epsilon) \, \mu(i) \, \gamma^{n-1}.
\end{eqnarray*}
Then, the independence relation follows. We can be more precise,
we have
$$
\PP_\mu(\yX_{\tau_\E}-1=i, \yX_{\tau_\E}=\epsilon)
=\hP(i,\epsilon)\, \mu(i) (\sum_{l\ge 1}\gamma^{l-1})
=\hP(i,\epsilon) \, \mu(i) \, (1-\gamma)^{-1}.
$$
Since $\PP_\mu(\tau_\E=n)=(1-\gamma)\gamma^{n-1}$, then the 
desired relation holds
$$
\PP_\mu(\yX_{\tau_\E}-1=i, \yX_{\tau_\E}=\epsilon, \tau_\E=n)=
\PP_\mu(\yX_{\tau_\E}-1=i, \yX_{\tau_\E}=\epsilon)\PP_\mu(\tau_\E=n).
$$

The above computations also show that the exit law of $I$ when 
starting from $\mu$ satisfies
\begin{equation}
\label{equ2}
\PP_\mu(\yX_{\tau_\E}\!=\!\epsilon)\!=\!(1\!-\!\gamma)^{-1}
(\sum_{i\in I}\mu(i)\hP(i,\epsilon)), \;
\PP_\mu(\yX_{\tau_\E}\!=\!\epsilon,\tau_\E\!=\! n)\!=\!
(\sum_{i\in I}\mu(i)\hP(i,\epsilon)) \gamma^{n-1}.
\end{equation}
These properties depend on the 
absorbed trajectory $\yX^{(a)}=(\yX_n: 0\le n\le \tau_\E)$.

\section{Associated stationary chain}
We are considering the matrix $\hP(a,b)=(\hP(a,b): 
a,b\in I\cup \E)$.
Let $\rho=(\rho(a): a\in I\cup \E)$ be a probability vector.
One defines the matrix $\hP^\rho=(P^\rho(a,b): 
a,b\in I\cup \E)$ by
$$
\hP^\rho(i,b)=\hP(i,b) \hbox{ if } i\in I, b\in I\cup \E , \;\;
\hP^\rho(\epsilon,b)=\rho(b) \hbox{ if } \epsilon\in \E,
b\in I\cup \E.
$$
So, in $\hP^\rho$ the $\epsilon-$row 
is $\hP^\rho(\epsilon,\bullet)=\rho^t$ for all $\epsilon\in \E$.

\medskip

\begin{proposition}
\label{prop1}
Every q.s.d. $\mu$ of $P_I$ determines a 
probability distribution $\pi=(\pi(a): a\in I\cup \E)$ 
given by
\begin{equation}
\label{equ3}
\forall \epsilon\in \E, \; \pi(\epsilon)=\sum_{i\in I}
\mu(i)\hP(i,\epsilon), \hbox{ and }
\forall i\in I, \; \pi(i)=\gamma \mu(i).
\end{equation}
which is a stationary distribution of the matrix 
$\hP^\pi=(\hP^\pi(a,b): a,b\in I\cup \E)$. In a reciprocal way,
every distribution $\tpi$ that satisfies $\tpi^t=\tpi^t \hP^\tpi$
is defined by a q.s.d. $\mu$ as in (\ref{equ3}). So, if $P_I$
has a unique q.s.d. (as in the finite case) then there is a unique 
distribution $\pi$ that satisfies $\pi^t=\pi^t \hP^\pi$.  
\end{proposition}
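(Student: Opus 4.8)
The plan is to handle the two implications separately, but first to record the single algebraic fact that makes the whole statement tractable. The relation $\pi^t=\pi^t\hP^\pi$ looks self-referential, because $\hP^\pi$ carries $\pi$ itself in each of its $\E$-rows; the key is that this dependence enters only through the scalar $S=\sum_{\epsilon\in\E}\pi(\epsilon)$. Writing $\pi(b)=\sum_{a\in I\cup\E}\pi(a)\hP^\pi(a,b)$ and using $\hP^\pi(i,\cdot)=\hP(i,\cdot)$ for $i\in I$ together with $\hP^\pi(\epsilon,\cdot)=\pi(\cdot)$ for $\epsilon\in\E$, one sees that $\pi^t=\pi^t\hP^\pi$ holds if and only if, for every $b\in I\cup\E$,
\[
(1-S)\,\pi(b)=\sum_{i\in I}\pi(i)\,\hP(i,b).
\]
Thus the coupling to $\E$ is absorbed into the factor $1-S$, and the system becomes linear in the $I$-block of $\pi$.

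For the direct implication I would first check that the $\pi$ of (\ref{equ3}) is a probability vector. Summing $\pi(i)=\gamma\mu(i)$ over $i\in I$ gives $\gamma$, while summing $\pi(\epsilon)=\sum_{i}\mu(i)\hP(i,\epsilon)$ over $\epsilon\in\E$ gives $\sum_i\mu(i)\hP(i,\E)=\sum_i\mu(i)\bigl(1-\sum_{j\in I}P(i,j)\bigr)=1-\gamma$ by (\ref{equ1}); the two add up to $1$, and in particular $S=1-\gamma$, so $1-S=\gamma$. It then remains to verify the displayed identity case by case. For $b=j\in I$ the right-hand side is $\gamma\sum_i\mu(i)P(i,j)=\gamma^2\mu(j)$ by the eigenvector equation (\ref{equ1}), which matches $(1-S)\pi(j)=\gamma\cdot\gamma\mu(j)$; for $b=\epsilon\in\E$ the right-hand side is $\gamma\sum_i\mu(i)\hP(i,\epsilon)=\gamma\,\pi(\epsilon)=(1-S)\pi(\epsilon)$. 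Hence $\pi^t=\pi^t\hP^\pi$.

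For the converse I would start from an arbitrary probability vector $\tpi$ solving $\tpi^t=\tpi^t\hP^\tpi$ and apply the collapsed identity with $T:=1-S=\sum_{i\in I}\tpi(i)$. Restricting $b$ to $I$ shows that the $I$-block $\tpi_I=(\tpi(i):i\in I)$ is a nonnegative left eigenvector of $P_I$, namely $\tpi_I^tP_I=T\,\tpi_I^t$. This is where the real content lies and where I expect the \emph{main obstacle}: one must have $T>0$ to go further, because any distribution supported entirely on $\E$ also solves $\tpi^t=\tpi^t\hP^\tpi$ trivially and corresponds to no q.s.d.; so the assertion is to be read for the stationary distributions that charge $I$. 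Granting $T>0$, irreducibility of $P_I$ makes $\tpi_I$ strictly positive (the normalized Perron vector, as recalled after (\ref{equ1})), and strict substochasticity forces $T<1$. Therefore $\mu:=\tpi_I/T$ is a q.s.d. with eigenvalue $\gamma=T$, giving $\tpi(i)=T\mu(i)=\gamma\mu(i)$; substituting this into the collapsed identity at $b=\epsilon\in\E$ yields $\tpi(\epsilon)=T^{-1}\sum_i\tpi(i)\hP(i,\epsilon)=\sum_i\mu(i)\hP(i,\epsilon)$. These are exactly the two formulas of (\ref{equ3}), so $\tpi$ is the $\pi$ built from $\mu$.

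The uniqueness clause then follows from this bijection: a unique q.s.d. $\mu$ pins down $\gamma$ and hence, through (\ref{equ3}), a unique $\pi$ among the distributions charging $I$; in the finite case the existence and uniqueness of such a $\mu$ are the Perron--Frobenius facts quoted after (\ref{equ1}). The only genuinely delicate point in the argument is ensuring $T>0$ and the attendant strict positivity of $\tpi_I$, which is exactly what separates the stationary distributions arising from a q.s.d. from the spurious ones supported on $\E$.
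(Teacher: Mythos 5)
Your proof is correct and takes essentially the same route as the paper: the direct implication is the same verification that $\pi$ is a probability vector ($\pi(I)=\gamma$, $\pi(\E)=1-\gamma$) satisfying $\pi^t=\pi^t\hP^\pi$, and the converse likewise restricts the stationarity equation to $I$, normalizes the resulting left eigenvector of $P_I$ into a q.s.d.\ with eigenvalue $\gamma=T$, and then recovers the $\E$-coordinates. The one point where you go beyond the paper is the explicit handling of the degenerate solutions supported on $\E$ --- the paper's proof silently asserts that $\tpi_I$ is strictly positive, whereas you correctly observe that any distribution concentrated on $\E$ also solves $\tpi^t=\tpi^t\hP^\tpi$ --- so your caveat that the converse must be read for distributions charging $I$ is a legitimate refinement of the same argument rather than a different approach.
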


\begin{proof}
The q.s.d. $\mu$ satisfies $\mu^t P_I=\gamma\mu^t$
with $\gamma\in (0,1)$ and $\sum_{i\in I} \mu(i)=1$. 
The vector $\pi$ is a probability 
distribution because from (\ref{equ3}) and (\ref{equ1}) 
one gets that $\pi(I)=\sum_{i\in I} \pi(i)$ and 
$\pi(\E)=\sum_{\epsilon\in \E}\pi(\epsilon)$ satisfy
\begin{equation}
\label{equ4}
\pi(I)=\sum_{i\in I}
\mu(i)\hP(i,\epsilon)=\gamma \hbox{ and }
\pi(\E)=\sum_{i\in I} \mu(i)\hP(i,\E)=1-\gamma.
\end{equation}
Let us check that $\pi$ is stationary for $P^\pi$.
For $\epsilon\in \E$ and $j\in I$ we have
\begin{eqnarray*}
&{}& (\pi^t \hP^\pi)(\epsilon)=\pi(\epsilon)\sum_{\delta\in \E}
\pi(\delta)+\sum_{i\in I} \pi(i) \hP(i,\epsilon)=  
\pi(\epsilon)(1-\gamma)+\gamma \pi(\epsilon)=\pi(\epsilon)\, \\
&{}& (\pi^t \hP^\pi)(j)=\pi(j)\sum_{\epsilon\in \E}
\pi(\epsilon)+\sum_{i\in I} \pi(i) P(i,j)=
\pi(j)(1-\gamma)+\gamma \pi(j)=\pi(j).
\end{eqnarray*}
Then $\pi^t=\pi^t \hP^\pi$ holds.

\medskip

Now, we check that $\tpi$ is a
probability distribution that satisfies $\tpi^t=\tpi^t \hP^\tpi$,
then it is defined by a q.s.d. $\mu$ as in (\ref{equ3}).
For $j\in I$ one has,
$$
\tpi(j)=(\tpi^t \hP^\tpi)(j)=\tpi(j) \sum_{\delta\in \E}
\tpi(\delta)+ \sum_{i\in I} \tpi(i) P(i,j).
$$
and so $\tpi(j)(1-\sum_{\delta\in \E}
\tpi(\delta))=\sum_{i\in I} \tpi(i) P(i,j)$.
Then, the restriction $\tpi_I=(\tpi(i): i\in I)$ satisfies
$\gamma \tpi_I^t= \tpi_I^t P_I$
for some $\gamma$. So, $\tpi_I$ is an strictly positive 
left eigenvector with finite mass, then $\mu=\alpha^{-1} \tpi_I$ is a
q.s.d. when one takes $\alpha=\sum_{i\in I} \tpi(i)$ and its eigenvalue 
is $\gamma$. So $\tpi_I=\gamma \mu=(\pi(i): i\in I)$ given by 
the second term in (\ref{equ3}). On the other hand one gets
$$
\tpi(\epsilon)=(\tpi^t \hP^\tpi)(\epsilon)=\tpi(\epsilon)\sum_{\delta\in 
\E}
\tpi(\delta)+\sum_{i\in I} \tpi(i) \hP(i,\epsilon).
$$
Then,
$\tpi(\epsilon)(1-\sum_{\delta\in \E}
\tpi(\delta))=\sum_{i\in I} \tpi(i) \hP(i,\epsilon)$, so
$\tpi(\epsilon)\alpha=\alpha \sum_{i\in I} \mu(i) \hP(i,\epsilon)$
giving the equality $\tpi(\epsilon)=\sum_{i\in I} \mu(i)
\hP(i,\epsilon)$, so $\tpi=\pi$ which finishes the proof.
\end{proof}

Observe that (\ref{equ2}) is written
$$
\PP_\mu(\yX_{\tau_\E}=\epsilon)=\frac{\pi(\epsilon)}{\pi(\E)}=
\pi(\epsilon \, | \, \E) \hbox{ and }
\PP_\mu(\yX_{\tau_\E}=\epsilon,\tau_\E=n)=\pi(\epsilon \, | \, \E)
(1-\gamma) \gamma^{n-1}.
$$

We note by $\axX=(\xX_n)$ the Markov chain evolving with 
the transition kernel $\hP^\pi$ and call it the associated 
stationary chain. Since $\hP^\pi$ extends $\hP$, 
by abuse of notation we also note it by $P$. All the concepts 
developed in the absorbed case depended only on the trajectory 
$\yX^{(a)}=(\yX_n: n\le \tau_\E)$ which is equally distributed 
as $(\xX_n: n\le \tau_\E)$ when starting 
from $X_0=\yX_0\in I$. Hence, there is no confusion if 
one continues noting by $\PP_a$ the 
law of the chain $\axX$ starting from $a\in I\cup \E$ and by 
$\EE_a$ its associated mean expected value. 

\medskip

Since $\axX=(\xX_n)$ has transition probability kernel $\xP$
and stationary distribution $\pi$, its entropy is
\begin{eqnarray}
\nonumber
h(\axX)&=&-\sum_{\delta\in \E} \pi(\delta)
\sum_{a\in I\cup \E}\pi(a) \log  \pi(a)
-\sum_{i\in I} \pi(i) \sum_{b\in I\cup \E} P(i,b) \log  P(i,b)\\
\label{equ21}
&=& -\pi(\E) \sum_{i\in I} \pi(i) \log  \pi(i)-
\pi(\E) \sum_{\delta\in \E} \pi(\delta) \log \pi(\delta)\\
\nonumber
&{}&\; -\sum_{i,j\in I} \pi(i) P(i,j) \log  P(i,j)
-\sum_{i\in I,\delta\in \E} \pi(i) P(i,\delta) \log P(i,\delta) .
\end{eqnarray}

Further, we will compare this entropy to the entropies of 
some random sequences appearing in the chain. 

\section{Elements of the associated stationary chain}

The object of this section is to show how one can retrieve 
the chain $\axX$ from the 
absorbed trajectories and some walks on the absorbing states.
In this purpose, the behavior of chain $\axX$ is firstly decomposed   
along its visits to $I$ and to $\E$ in a separated way.

\subsection{Decoupling the stationary chain}

Let
$$
\tau_I=\inf\{n\ge 1: \xX_n\in I\}
$$
be the first return time of $\axX$ to $I$. Now,
consider the stochastic matrix $Q=(Q(i,j): i,j\in I)$ given by
$$
Q(i,j)=\xP_i(X_{\tau_I}=j).
$$
By using that 
$\PP_\epsilon(\xX_{\tau_I}=j)=\pi(j \, | \, I)=\mu(i)$ for 
all $\epsilon\in \E, j\in I$, one gets
\begin{eqnarray}
\nonumber
Q(i,j)&=& \EE_i \left({\bf 1}_{\{\xX_{\tau_I}=j, {\tau_I}=1\}}\right)
+\EE_i\left({\bf 1}_{\{\xX_{\tau_I}=j,{\tau_I}>1\}}\right)\\
\nonumber
&=& P(i,j)+ \PP_i({\tau_I}>1) 
\PP_i\left(\xX_{\tau_I}=j \, | \, \xX_{\tau_I-1}, {\tau_I}>1 \right)\\
\label{equ30}
&=&P(i,j)+P(i,\E) \mu(j).
\end{eqnarray}
Let $\aY=(Y_n: n\in \ZZ)$ be a Markov chain with transition matrix $Q$.
It is straightforwardly checked that $\mu$ is a stationary measure 
for $\aY$.

\begin{remark}
\label{rmk1}
For a substochastic matrix $P_I$ the matrix 
$Q=(Q(i,j)=P(i,j)+P(i,\E) \mu(j): i,j\in I)$ was defined 
in \cite{fkmp} and called the resurrected 
matrix from $P_I$ with distribution $\mu$. It was
a key concept used in \cite{fkmp} to prove
the existence of q.s.d. for geometrically absorbed Markov chains 
taking values in an infinite countable set. $\Box$
\end{remark}

The chain $\aY$ can be constructed as follows. Let
$\Xi=\{\xi_l: l\in \ZZ\}$ be the ordered sequence given by
$$
\{\xi_l: l\in \ZZ\}=\{n\in \ZZ: \xX_n\in I\} \hbox{ with } 
\xi_{l-1}<\xi_{l}\, , \; \xi_{-1}<0\le \xi_0.
$$
Then, $(\xi_{l}-\xi_{l-1}: l\in \ZZ)$ is a 
renewal stationary sequence with interarrival times distributed as
$\PP(\xi_{l}-\xi_{l-1}=\bullet)=\PP_\mu(\tau_I=\bullet)$, $l\neq 0$.
By definition $(\xX_{\xi_l}: l\in \ZZ\}$ is a stationary sequence
distributed as $\aY=(Y_n: n\in \ZZ)$,
so $(\xX_{\xi_l}: l\in \ZZ)$ is a copy of $\aY$.

\medskip

The random sequence 
${\bf{b}}=(b_l: l\in \ZZ, l\neq 0)$ defined by
$b_l=1$ if $\xi_{l}-\xi_{l-1}=1$ and $b_l=0$ if 
$\xi_{l}-\xi_{l-1}>1$, 
is a collection of i.i.d. random variables, with
$$
\PP(b_l=1)=\pi(I) \hbox{ and } 
\PP(b_l=0)=\sum_{i\in I} \mu(i) P(i,\E)=\pi(\E).
$$
When $\xX_0\in I$, one finds $\tau_\E =\inf\{l\ge 1: b_l=0\}$.

\begin{remark}
\label{rmk9}
Every irreducible matrix stochastic matrix $Q$ with stationary distribution 
$\mu$ can be written as in (\ref{equ30}). 
In fact, let $\chi=(\chi(i): i\in I)$ be a non-null vector,  
$\chi\neq {\vec 0}$, that satisfies
$$
\forall i\in I: \quad 0\le \chi(i)<1 \hbox{ and } \chi(i)\le \min\{Q(i,j)
\mu(j)^{-1}: j\in I\}.
$$
This can be achieved because $\mu$ is strictly positive.
Define $P_I=Q-\chi \mu^t$, so
\begin{equation}
\label{equ31}
\forall i,j\in I: \quad P(i,j)=Q(i,j)-\chi(i) \mu(j).
\end{equation}
To avoid trivial situation one can take $\chi$ also satisfying that
for every $i\in I$ and for some (or for all) $j\in I$ for which $Q(i,j)>0$ one
has $P(i,j)>0$. This allows to take $\chi$ ensuring $P_I$ is irreducible. 
From the construction $P(i,j)\in [0,1)$ and since $\chi\neq 0$ we get
$$
\sum_{j\in I} P(i,j)=1-\chi(i)\in (0,1] \hbox{ and } \sum_{i, j\in I} (1-P(i,j))>0.
$$
Hence $P$ is strictly substochastic, it is not trivial, 
and when adding the cemetery $\partial$ one has 
$\chi(i)=P(i,\partial)$. So, 
$$
\mu^t P=\mu^t(Q-\chi\mu^t)=(1-\sum_{i\in I} \mu(i)P(i,\partial))\mu^t,
$$
that is $\mu^t$ is the Perron-Frobenius left eigenvector of $P$
with eigenvalue $\gamma=\sum_{i, j\in I} \mu(i) P(i,j)$, see (\ref{equ1}). 
From (\ref{equ31}) it follows $Q(i,j)=P(i,j)+P(i,\partial)\mu(j)$, so
(\ref{equ30}) is satisfied. $\Box$
\end{remark}

The restriction of $P=P^\pi$ to the absorption states $\E$ 
satisfies,
$$
\forall \epsilon,\delta\in \E: \;\; \xP(\epsilon,\delta)=\pi(\delta),
\hbox{ so }
\PP(X_1= \epsilon \, | \, X_0=\delta, X_1\in E)=\pi(\delta \, | \, \E).
$$
The transition law to an absorbing point after being in 
$\xX_{t-1}=i\in I$, is given by
$$
\forall \delta\in \E: \quad
\PP(\xX_{t}=\delta \, | \, \xX_{t}\in \E, \xX_{t-1}=i)
=P(i,\delta)/P(i,\E).
$$
So, if $X_{-1}\in I$ and $X_0\in \E$, the total sojourn time at $\E$ 
is $\tau_I$ satisfies
$\tau_I\sim \hbox{Geometric}(\gamma)$.
Then, immediately after the entrance to $\E$ the chain $\xX$ makes a 
walk on $\E$ of length $\tau_I-1$ (quantity that could vanish). To describe it
take $\aG=(G_n: n\in \ZZ)$ be a Bernoulli chain with
probability vector $\pi(\bullet \, | \, \E)$.
Let us consider an independent time 
$V\sim\, \hbox{Geometric}(\gamma)-1$ (that is 
$V+1 \sim\, \hbox{Geometric}(\gamma)$) and define a finite 
sequence $V=(G_l: 1\le l<\tau_I)$ which is distributed as follows,
\begin{eqnarray}
\label{equ5}  
\PP(V=\emptyset)&=&\PP(\tau_I=1)=\pi(I);\\
\nonumber
\PP(V=(\delta_1,..,\delta_{k-1}))&=&
\PP(G_1\!=\!\delta_1,..,G_{k-1}\!=\!\delta_{k-1}, 
\tau_I\!=\!k)\\
\nonumber
&=& (\prod_{l=1}^{k-1}\pi(\delta_l)/\pi(\E))\pi(\E)^{k-1}\pi(I) 
=(\prod_{l=1}^{k-1}\pi(\delta_l))\pi(I)\\
\nonumber
&{}&
\hbox{ for } k\ge 2, (\delta_1,..,\delta_{k-1})\in \E^{k-1}.
\end{eqnarray}
Notice that last equality also holds when $\tau_I=k=1$
because an empty product satisfies $\prod_{l=1}^{k-1}=1$. One has,
$$
(X_t, 1\le t < \tau_I \, | \, X_{-1}\in I, X_0\in \E)\sim V,
$$
and $V$ is called a walk on $\E$. Note that 
$\tau_I-1 \, | \, \tau_I>1$ is equally distributed as $\tau_I$.
The exit law from $\E$ is $\PP(\xX_{\tau_I}\in \bullet)\sim \mu$.
In fact, for all $\delta\in E$ it holds
\begin{eqnarray*}
\PP_\delta(\xX_{\tau_I}=i)&=&\sum_{\epsilon\in
\E}\PP_\delta(\xX_{\tau_I}=i,
(\xX_{\tau_I-1}=\epsilon)=\sum_{\epsilon\in \E}
\PP_\delta(\xX_{\tau_I-1}=\epsilon)
\frac{\xP_\epsilon(\xX_1=i)}{\xP_\epsilon(\xX_1\in I)}\\
&=& \pi(i \, | \, I)=\mu(i).
\end{eqnarray*}
Notice that
\begin{equation}
\label{equ6}
\forall \delta\in \E: \quad \EE_\delta(\tau_I)=\pi(I)^{-1}\,.
\end{equation}

We consider a sequence of i.i.d. random variables 
$\aTa=(\Ta_n: n\in \ZZ)$ which are $\hbox{Geometric}(\gamma)-1$ 
distributed, that is 
$\PP(\Ta_n=l)=\gamma (1-\gamma)^{l}$ for $l\ge 0$.
The construction of i.i.d. walks on $\E$ is made as follows. One 
takes an increasing sequence of
times $(t_n: n\in \ZZ)$ with $t_{n+1}-t_n=\Ta_n$ and such that
$t_n\to \infty$ if $n\to \infty$ and $t_n\to -\infty$ if 
$n\to -\infty$. One defines 
$$
V^n=(G_{t_n},..,G_{t_{n+1}-1})=(V^n_1,..,V^n_{\Ta_n}).
$$
So, $\aV=\left(V^n: n\ge \ZZ\right)$ is an i.i.d. sequence
of walks on $\E$. The walk $V^n$ is empty when $\Ta_n=0$.

\subsection{Retrieving the stationary chain} 
Let $\aY=(Y_n: n\in \ZZ)$ be a stationary Markov chain 
with transition matrix $Q$.
Our purpose is to construct a copy of $\axX$ from $\aY$ 
by adding a series of random operations.

\medskip

Let $\PP$ be a probability measure governing 
the law of $\aY$ when it starts from the stationary distribution 
$\mu$, the sequences $\aG$, $\Ta$ and so $\aV$, 
and also the random element $\aB^{I,I}$
and $\aD^I$ defined below.

\medskip

Let $\aB^{I,I}=\left((B^{i,j}_l: l\in \ZZ); i,j\in I \right)$ 
be an independent array of Bernoulli random variables such that
$B^{i,j}_l\sim B^{i,j}$ for $l\in \ZZ$, where 
\begin{eqnarray}
\label{equ7}
&{}& \PP(B^{i,j}=1)=\theta_{i,j}, \, \PP(B^{i,j}=0)
=\oth_{i,j}=1-\theta_{i,j} \hbox{ with }\\
\nonumber 
&{}& \theta_{i,j}=\frac{P(i,j)}{P(i,j)+P(i,\E)\mu(j)}
=\frac{P(i,j)}{Q(i,j)}.
\end{eqnarray}
Let 
\begin{equation}
\label{equ8}
\tau_\partial=\inf\{l\ge 1: B^{Y_{l-1},Y_l}_l=0\}.
\end{equation}
For $k\ge 1$, $i_0,..,i_{k-1}\in I$ one has,
\begin{eqnarray}
\label{equ9}
&{}&\PP(Y_0=i_0, Y_1=i_1,...,Y_{k-1}=i_{k-1}, \tau_\partial=k)\\
\nonumber
&{}&\; =
\mu(i_0)(\prod_{l=1}^{k-1} P(i_{l-1},i_l))
(\sum_{j\in I}P(i_{k-1},\E)\mu_j)
=\mu(i_0)(\prod_{l=1}^{k-1} P(i_{l-1},i_l))P(i_{k-1},\E). 
\end{eqnarray}
Hence, the distribution of the sequence $Y^{(k)}=(Y_l: 0\le l<\partial)$ 
is the one of the killed chain $\yX^{(k)}$ starting from $\mu$.

\medskip

Now take an independent array
$\aD^I=\left((D^{i}_l: l\in \ZZ); i\in I \right)$ 
of random variables with values in $\E$ and law
\begin{equation}
\label{equ10}
\forall \delta \in \E:\quad \PP(D^{i}_l=\delta)=P(i,\delta)/P(i,\E).
\end{equation}

For $k\ge 1$, $i_0,..,i_{k-1}\in I$, $\delta\in \E$ we set,
\begin{eqnarray*}
&{}&\PP(Y_0=i_0, Y_1=i_1,...,Y_{k-1}=i_{k-1}, D^{i_{k-1}}_k=\delta, 
\tau_\partial=k)\\
&{}&\; =
\mu(i_0)(\prod_{l=1}^{k-1} P(i_{l-1},i_l))P(i_{k-1},\E)
(P(i_{k-1},\delta)/P(i_{k-1},\E))\\
&{}&\; =\mu(i_0)(\prod_{l=1}^{k-1} P(i_{l-1},i_l))P(i_{k-1},\delta).
\end{eqnarray*}
Then, the distribution of the sequence 
$Y^{(a)}=(Y_0,..,Y_{\tau_\partial-1}, 
D^{Y_{\tau_{\partial}-1}}_{\tau_\partial})$ 
is the one of a the absorbed chain $\yX^{(a)}$ starting from $\mu$.
 
\medskip

Let us construct a chain $\hS^s=(\hS^s_t: t\in \ZZ)$ from $\aY$, 
$\aB^{I,I}$, $\aD^Y$, $\aG$ and $\Ta$ (and so also $\aV$),
having the same distribution as $\axX$. 
Firstly, define a random sequence $\hS=(\hS_t: t\in \ZZ)$ as follows.
We set $T_0=0$, $\hS_0=Y_0$ and,

\smallskip 

\noindent I. In a sequential way on $n\ge 0$, one makes the 
following construction. Assume
at step $n$, $T_n$ has been defined then one puts $\hS_{T_n}=Y_n$
and goes to step $n+1$;

\smallskip

\noindent I.a. If $B^{Y_n,Y_{n+1}}_{n+1}=1$ put $T_{n+1}=T_n+1$,  
$\hS_{T_{n+1}}=Y_{n+1}$ and go to step $n+2$;

\smallskip

\noindent I.b. If $B^{Y_n,Y_{n+1}}_{n+1}=0$ put 
$T_{n+1}=T_n+\Ta_n+2$, define $\hS_{T_{n}+1}=D^{Y_n}_{n+1}$, 
$\hS_{T_{n}+1+l}=V^n_l$ for $1\le l<\Ta_n$ (it is empty 
when $\Ta_n=0$) and $\hS_{T_{n+1}}=Y_{n+1}$. 
After one continues with step $n+2$.

\medskip

\noindent II. Similarly, in a sequential way on $n<0$ one makes the
following construction for step $n$;

\smallskip

\noindent IIa. If $B^{Y_{n},Y_{n+1}}_{n+1}=1$ put $T_{n}=T_{n+1}-1$,
$\hS_{T_{n}}=Y_n$ and continue with step $n-1$;

\smallskip

\noindent IIb. If $B^{Y_n,Y_{n+1}}_{n+1}=0$ put $T_{n}=T_{n+1}-(\Ta_n+2)$, 
$\hS_{T_n+1}=D^{Y_n}_{n+1}$, $\hS_{T_{n}+1+l}=V^n_l$ for
$1\le l<{\Ta_n}$ and
$\hS_{T_n}=Y_{n}$. After one continues with step $n-1$.

\medskip

Let $\hS=(\hS_t: t\in \ZZ)$ be the random sequence resulting from this
construction. 

\medskip

Let $\tT=(T_n: n\in \ZZ)$, recall that $T_0=0$. By abuse of notation
we also note by $\tT=\{T_n: n\in \ZZ\}$ the set of these values.
By definition $\tT=\{t\in \ZZ: \hS_t\in I\}$
is the set of random points where $\hS$ is in $I$. 
We have that $(\hS, \tT)$ is a regenerative process, see \cite{as} 
p. 169-170, that is for all $T_n$ one has that 
$(\hS_{\bullet+ T_l}: \bullet \ge 0; T_n-T_l, n\ge l)$ 
has the same distribution as $(\hS_{\bullet}: \bullet \ge 0; 
T_n, n\ge 0)$ and it is independent of $(T_n: n\le l)$. 

\medskip

We have that a cycle of this regenerative process is the sequence 
of states $(\hS_0,...,\hS_{\tau_I-1})$, in fact this is the
common distribution for $(\hS_{T_n},...,\hS_{T_{n+1}-1})$. 
By shifting the process $\hS^s=(\hS^s_t: t\in \ZZ)$ in a random 
time chosen uniformly in $\{T_0,..,T_1-1\}$, one 
gets a stationary process $\hS^s=(\hS^s_t: t\in \ZZ)$ (see Theorem 6.4 
in \cite{as}).

\begin{proposition}
\label{prop2}
The processes $\hS^s$ and $\axX$ are equally distributed.
\end{proposition}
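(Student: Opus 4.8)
The plan is to establish the identity in law by matching the two stationary processes through a common skeleton --- the chain read at the visits to $I$ --- together with the excursions that each process inserts into $\E$ between consecutive visits to $I$. Carrying this out is the same as showing that $\hS$ is a Markov chain with kernel $\hP^\pi$, whose stationary version is $\axX$. I would organise the argument as \emph{skeleton}, then \emph{excursions}, then \emph{stationarisation}.

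First I would identify the $I$-skeleton of each process with the resurrected chain $\aY$. For $\hS$ this is built in: by construction $\tT=\{t:\hS_t\in I\}$ and $(\hS_{T_n})_{n}=(Y_n)_{n}$. For $\axX$ the computation (\ref{equ30}) says precisely that the chain observed at the instants $\{t:\xX_t\in I\}$ has transition matrix $Q(i,j)=P(i,j)+P(i,\E)\mu(j)$ and stationary law $\mu$, so it is again a copy of $\aY$. Hence the two skeletons have the same law.

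Next I would compute, in both processes and conditionally on the skeleton, the law of the block inserted between two consecutive $I$-visits carrying the values $i$ and $j$. In $\hS$ this law is read off from the construction: the gap is empty with probability $\theta_{i,j}=P(i,j)/Q(i,j)$ by (\ref{equ7}), and otherwise an $\E$-excursion is inserted whose first state has law $P(i,\cdot)/P(i,\E)$ by (\ref{equ10}), whose interior is an i.i.d. $\pi(\cdot\mid\E)$ walk, and whose $\E$-length is $\Ta+1\sim\mathrm{Geometric}(\gamma)$. For $\axX$ I would derive the identical law directly from $\hP^\pi$: decomposing $Q(i,j)=P(i,j)+P(i,\E)\mu(j)$ shows that, given the embedded step $i\to j$, the gap is empty with probability $P(i,j)/Q(i,j)=\theta_{i,j}$; in the complementary event the chain enters $\E$ at a state distributed as $P(i,\cdot)/P(i,\E)$, and then, because $\hP^\pi(\epsilon,\cdot)=\pi$ for every $\epsilon\in\E$, it leaves $\E$ at each instant with probability $\pi(I)=\gamma$ by (\ref{equ4}) --- so the sojourn is $\mathrm{Geometric}(\gamma)$ --- performs i.i.d. $\pi(\cdot\mid\E)$ steps in the interior, and exits at a state distributed as $\pi(\cdot\mid I)=\mu$. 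I would check that in both processes these blocks are conditionally independent across gaps given the skeleton, so that the conditional excursion laws coincide; combined with the agreement of the skeletons (and with the killed-trajectory law (\ref{equ9}) for the maximal alive runs) this yields the equality in law of the two processes started from a regeneration.

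It then remains to pass to the stationary versions. The process $\hS^s$ is the stationary version of the regenerative $(\hS,\tT)$ obtained by the shift of Theorem 6.4 in \cite{as}, and a stationary process with this regenerative structure is determined by the joint law of its skeleton and of its conditional excursions; since $\axX$ is stationary and shares both, I conclude that $\hS^s$ and $\axX$ are equally distributed. The step I expect to be the main obstacle is the excursion identification for $\axX$: one must show that, conditionally on the embedded transition $i\to j$, the inserted $\E$-block is independent of the exit value $j$ and of the whole past and has exactly the i.i.d.-walk-of-geometric-length law described above. This rests on two facts already isolated in the paper --- the memorylessness produced by $\hP^\pi(\epsilon,\cdot)=\pi$, which makes the $\E$-sojourn genuinely $\mathrm{Geometric}(\gamma)$ with i.i.d. $\pi(\cdot\mid\E)$ interior, and the exit law from $\E$ being $\mu$ irrespective of the entrance point and of the sojourn length --- tied together by the strong Markov property at the entrance to $\E$. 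Once this conditional independence is secured, the remaining matching is bookkeeping and the stationarisation is standard.
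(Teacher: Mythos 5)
Your proposal is correct, but it is organised differently from the paper's own proof, and the difference is worth spelling out. The paper works \emph{forwards from} $\hS^s$: using the regeneration at the $I$-visits (\ref{equ23}), it computes the marginals $\PP(\hS^s_t=a)=\pi(a)$ and all the conditional transition probabilities of $\hS^s$ given the past back to the last $I$-visit ((\ref{equ24})--(\ref{equ29})), the delicate case being a return to $I$ after a nontrivial sojourn in $\E$; this establishes directly that $\hS^s$ is a stationary Markov chain with kernel $\hP^\pi$ and invariant law $\pi$, hence a copy of $\axX$. You work \emph{backwards from} $\axX$: you decompose the stationary chain $\axX$ into its $I$-skeleton (a copy of $\aY$, by (\ref{equ30})) plus conditionally independent $\E$-excursions, verify that the conditional excursion law --- empty gap with probability $\theta_{i,j}$, otherwise entrance law $P(i,\cdot)/P(i,\E)$, i.i.d. $\pi(\cdot\mid\E)$ interior of $\mathrm{Geometric}(\gamma)$ sojourn, exit law $\mu$ independent of everything --- coincides with the ingredients $\aB^{I,I}$, $\aD^I$, $\aTa$, $\aG$ used to build $\hS$, and then invoke the one-to-one correspondence between cycle-stationary and time-stationary regenerative processes (Asmussen's Theorem 6.4, legitimate here since the mean cycle length $\EE_\mu(\tau_I)=\pi(I)^{-1}$ is finite) to pass from equality of the Palm versions to equality of $\hS^s$ and $\axX$. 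What your route buys is that you never have to prove the Markov property of $\hS^s$ --- it is inherited for free once the laws are identified --- at the price of two pieces of machinery the paper avoids: the bridge decomposition of a Markov chain (conditional independence of the segments between embedded $I$-visits given their endpoints, which you correctly reduce to the memorylessness of the $\pi$-rows and the entrance-independent exit law $\mu$, via the strong Markov property), and the uniqueness half of the Palm/stationarisation correspondence. What the paper's route buys is self-containedness: everything is an explicit finite computation with the geometric excursions, and no appeal to general regenerative-process theory beyond the construction of $\hS^s$ itself. Two small points to tighten in your write-up: the stationarity (not just the kernel $Q$) of the embedded sequence $(\xX_{\xi_l})_{l\in\ZZ}$ under the stationary law of $\axX$ deserves a word of justification rather than being read off from (\ref{equ30}) alone, and the claim that a stationary process ``is determined by'' its skeleton and conditional excursion laws is exactly the uniqueness statement of the Palm inversion, so it should be cited as such.
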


The proof is done in the Appendix.

\medskip

So, if $\hS^s_0\in I$, for $\tau^{{\bf{\hS}^s}}_\E=\inf\{l\ge 1: 
B^{\hS^s_{l-1}, \hS^s_{l}}_l=0\}$ one has that $({\bf{\hS}^s})^{(a)}
=(\hS^s_n: n\le \tau^{{\bf{\hS}^s}}_\E)$ is a copy of the absorbed chain,
see (\ref{equ8}) and (\ref{equ9}).

\section{Stationary representation of killed and absorbed chains}

The stationary Markov chain $\aY=(Y_n)$ with transition matrix 
$Q$ and stationary distribution $\mu$ has entropy 
\begin{equation}
\label{equx1}
h(\aY)=-\sum_{i\in I} \mu(i) \sum_{j\in I} Q(i,j)\log Q(i,j).
\end{equation}

For the stationary representation of the killed and absorbed chains 
we use the $2-$stringing form
of $\aY$. Let us recall this notion. Consider the 
stochastic matrix $Q^{[2]}$ with set of indexes $I^2$ given by,
$$
Q^{[2]}((i,j),(l,k))=Q(j,l) {\bf 1}(j=l).
$$
Its stationary distribution satisfies $\nu((i,j))=\mu(i)Q(i,j)$  
for $(i,j)\in I^2$.
In fact, by using $\sum_{i\in I}\mu(i)Q(i,j)=\mu(j)$ one gets
$$
\sum_{(i,j)\in I^2}\nu((i,j)) Q^{[2]}((i,j),(l,k))
=\sum_{i\in I}\mu(i)Q(i,l)Q(l,k)=\mu(l)Q(l,k)=\nu((l,k)).
$$
The stationary chain $\aY^{[2]}=((Y^1_n,Y^2_n):
Y^2_{n-1}=Y^1_{n}, n\in \ZZ)$ evolving with $Q^{[2]}$
is the $2-$stringing of $\aY$. We write it by 
$\aY^{[2]}=((Y_{n-1},Y_n): n\in \ZZ)$. It is well-known that
it is conjugated to $\aY$ by the ($1-$coordinate) mapping
$$
\Upsilon(((Y_{n-1},Y_n): n\in \ZZ))=(Y_n: n\in \ZZ).
$$
(This is stated in a general form in Lemma $1$ in \cite{ks}).
Let us check it is measure preserving. Take
$(i_l: l=0,..,k)\in I^{k+1}$. We have
\begin{eqnarray*}
&{}&\PP((Y_{n-1},Y_n) \in \aY^{[2]}:
(Y_{l-1},Y_l)=(i_{l-1},i_l): l=1,..,k)\\
&=&\mu(i_0)Q(i_0,i_1)\prod_{l=1}^{k-1} Q(i_l,i_{l+1})
=\PP(Y_l\!=\!i_l : l\!=\!0,..,k).
\end{eqnarray*}
So, the orbits $((Y_{n-1},Y_n): n\in \ZZ)\in \aY^{[2]}$ by 
can be identify with the orbits $(Y_n: n\in \ZZ)\in \aY$. 

\subsection{The killed chain}
\label{ssk}

The stationary representation of the killed chain is a stationary 
Markov chain with set of states 
$I^2\times \{0,1\}=\{(i,j,a): (i,j)\in I^2, a\in \{0,1\}\}$.
To introduce its transition matrix $\K$ define, 
$$
\varphi(i,j,a)=(\theta_{i,j}{\bf 1}(a=1)
+\oth_{i,j}{\bf 1}(a=0)),\; (i,j,a)\in I^2\times \{0,1\}.  
$$
The matrix
$\K=\left(\K((i,j,a),(l,k,b)): (i,j,a), (l,k,b))\in I^2\times \{0,1\}\right)$
is given by
$$
\K((i,j,a), (l,k,b))\!=\!
\begin{cases}
\!\!\!\!\! &0 \! \hbox{ if } l\! \neq \! j;\\
\!\!\!\!\! &\! Q(j,k)\varphi(j,k,b)\!=\!P(j,k){\bf 1}(b\!=\!1)
\!+\! P(j,\E)\mu(k){\bf 1}(b\!=\!0)
\hbox{ if } l\!=\!j.
\end{cases}
$$
It is a stochastic matrix: We claim its stationary distribution
$\zeta=(\zeta(i,j,a): (i,j,a)\in I^2\times \{0,1\})$ is given by
\begin{equation}
\label{equ11}
\zeta(i,j,a)\!=\! \mu(i)Q(i,j)\varphi(i,j,a)
\!=\!\mu(i)P(i,j){\bf 1}(a=1)\!+\!\mu(i)P(i,\E)\mu(j){\bf 1}(a=0).
\end{equation}
By using $\sum_{i\in I} \mu(i) Q(i,l)=\mu(l)$ 
one gets the desired property, 
\begin{eqnarray*}
&{}&\sum_{(i,j,a)\in I^2\times \{0,1\}}\!\!\!\!
\zeta(i,j,a) \K((i,j,a), (l,k,b))= 
(\sum_{i\in I} \mu(i) Q(i,l)) Q(l,k)\varphi(l,k,b)\\   
&{}&\; =
\mu(l)Q(l,k)\varphi(l,k,b)=\zeta(l,k,b),  
\end{eqnarray*}
so the claim follows. 

\medskip

The killed Markov chain presented in its stationary form is noted  
$\aY^{(\K)}=((Y_{n-1},Y_n,B_n): n\in \ZZ)$, it 
takes values in $I^2\times \{0,1\}$ and has
transition matrix $\K$. Below the component $B_n$ is called 
the label at $n$. By hypothesis $P_I$ is irreducible 
so also $\K$ is an irreducible matrix. Then the Markov shift 
$\aY^{(\K)}$ is ergodic (see Proposition 8.12 in \cite{dgs}).

\medskip

It is straightforward to check that the mapping,
\begin{equation}
\label{equ12}
\Upsilon^{(\K)}: \aY^{(\K)}\to \aY, (((Y_{n-1},Y_n,B_n): n\in \ZZ))\to (Y_n: n\in \ZZ),
\end{equation}
is a measure preserving factor.

\medskip

The orbit $((Y_{n-1},Y_n,B_n): n\in \ZZ)$ 
in $\aY^{(\K)}$ is noted in the simpler form $((Y_n,B_n): n\in \ZZ)$.

\begin{remark} 
\label{rmk2}
Let us see $\aY^{(\K)}$ models the killed Markov chain.  
Let ${\cal N}=\{n\in \ZZ: B_n=0\}$ and write it as
${\cal N}=\{n_l: l\in \ZZ\}$ with 
$n_l$ increasing in $l$ and $n_{-1}<0\le n_0$. Note that
$\PP(0\in {\cal N})=\pi(\E)$.

\medskip

We divide an orbit in $\aY^{(\K)}$ into the disjoint connected  
pieces
$$
(Y,B)^{(k)}_{l}=((Y_{n_l},1),..,(Y_{n_{l+1}-2},1), (Y_{n_{l+1}-1}, 0)),
\; l\in \ZZ\,.
$$
The component $Y_{n_l}$ is distributed with law $\mu$ for all $l$,
and one can identify $(Y,B)^{(k)}_{l}$ with
$Y^{(k)}_{l}=(Y_{n_l},..,Y_{n_{l+1}-1})$ a piece of the orbit
$Y=(Y_n: n\in \ZZ)$ starting from $\mu$ at $n_l$ and killed at $n_{l+1}-1$. 
One gets,
$$
\forall l\neq 0: \quad Y^{(k)}_{l}\sim \yX^{(k)}.
$$
when $\yX^{(k)}$ starts from $\mu$.
In fact, for $s\ge 0$, $i_0,..,i_s\in I$ we have,
\begin{eqnarray*}
&{}&\PP(\yX^{(k)}=(i_0,..,i_s))=\mu(i_0)\prod_{r=0}^{s-1} P(i_r,i_{r+1}) 
P(i_l,\partial)\\
&{}&=\mu(i_0)\left(\prod_{r=0}^{s-1} Q(i_r,i_{r+1})\theta_{i_r,i_{r+1}}\right)
(\sum_{m\in I}Q(i_s,m)\oth_{i_s,m})=\PP(Y_{l}^{(k)}=(i_0,..,i_s)). 
\end{eqnarray*}
Let $s\ge 0$, $(i_0,..,i_s)\in I^{s+1}$.
For almost all the orbits $Y\in \aY^{(\K)}$ and for all $l\in \ZZ$,
$l\neq 0$, one has 
$$
\PP(n_0=0, Y_0^{(k)}=(i_0,..,i_s))=\pi(\E)
\mu(i_0)\prod_{r=0}^{s-1} P(i_r,i_{r+1}) P(i_s,\partial)>0.
$$
Since each killed trajectory is finite, the class of killed trajectories 
is countable. Therefore, from the Ergodic Theorem and since $\aY^{(\K)}$ is ergodic, 
we get that $\PP-$a.e. the orbits of $\aY^{(\K)}$
contain all the killed trajectories of the chain. $\Box$
\end{remark}

The entropy of the killed chain is
\begin{eqnarray*}
&{}& \!\!\!\!\! h(\aY^{(\K)})\\
&=& \!-\!\sum_{(i,j)\in I^2} \!\!\! \mu(i) Q(i,j) \sum_{k\in I}
Q(j,k)\left(\theta_{j,k} \log (Q(j,k)\theta_{j,k})
+ \oth_{j,k}\log (Q(j,k)\oth_{j,k})\right)\\
&=& \!- \!\sum_{(j,k)\in I^2} \!\!\! \mu(j) Q(j,k) \log Q(j,k)
-\sum_{(j,k)\in I^2}\mu(j)Q(j,k) H(B^{j,k})
\end{eqnarray*}
where
$$
H(B^{j,k})=\theta_{j,k} \log \theta_{j,k}+\oth_{j,k} \log \oth_{j,k}
$$
is the entropy of the Bernoulli random variable $B^{j,k}$. Hence
\begin{equation}
\label{equ13}
h(\aY^{(\K)})=h(\aY)+\Delta(B) \hbox{ with }
\Delta(B)=-\sum_{(j,k)\in I^2}\mu(j)Q(j,k) H(B^{j,k}).
\end{equation}

The quantity $\Delta(B)=h(\aY^{(\K)})-h(\aY)$ is the conditional entropy of 
$\aY^{(\K)}$ given the factor $\aY$, see Lemma $2$ and 
Definition $3$ in \cite{dos}. To be more precise, given an orbit 
$Y=(Y_n: n\in \ZZ)$ of $\aY$, the fiber given by (\ref{equ12}) 
satisfies 
$(\Upsilon^{(\K)})^{-1}\{Y\}=\{(B^{Y_{n-1},Y_{n}}_n: n\in \ZZ)\in 
\{0,1\}^\ZZ\}$,
and it is distributed as a sequence of independent Bernoulli variables 
given by (\ref{equ7}), we note it 
by $\aP_{Y}$. We have
\begin{equation}
\label{equ14}
H_{\aP_{Y}}(B_1^{Y_0,Y_{1}})
=-(\theta_{Y_0,Y_1} \log \theta_{Y_0,Y_1}+ \oth_{Y_0,Y_1}\log 
\oth_{Y_0,Y_1}).  
\end{equation}

Let us summarize the results on the entropy of $\aY^{(\K)}$.

\begin{proposition}
\label{prop3}
The entropy of the stationary representation $\aY^{(\K)}$ of the killed 
chain satisfies
\begin{eqnarray}
\nonumber 
&{}&h(\aY^{(\K)})=h(\aY)+\Delta(B) \hbox{ with }\\ 
\label{equ15}
&{}&\Delta(B)=\int H_{\aP_{Y}}(B_1^{Y_0,Y_{1}}) d\PP(Y)=-
\sum_{(i,j)\in I^2}\mu(i)Q(i,j) H(B^{j,k});
\end{eqnarray}
and
\begin{eqnarray} 
\nonumber
 h(\aY^{(\K)}) &=&-\sum_{i\in I} \mu(i) P(i,\E)\log P(i,\E)-(1-\gamma)
\sum_{j\in I} \mu(j)\log \mu(j)\\
\label{equ16}
&{}&\; -\sum_{i,j\in I} \mu(i) P(i,j)\log P(i,j).
\end{eqnarray}
\end{proposition}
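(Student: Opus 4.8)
The plan is to build on the identity (\ref{equ13}), namely $h(\aY^{(\K)})=h(\aY)+\Delta(B)$ with $\Delta(B)=-\sum_{(j,k)\in I^2}\mu(j)Q(j,k)H(B^{j,k})$, which is already obtained from the direct entropy computation of the Markov shift $\aY^{(\K)}$ through its transition matrix $\K$ and stationary distribution $\zeta$ of (\ref{equ11}). Thus the first formula in (\ref{equ15}) needs no further work, and the task reduces to two remaining assertions: the integral representation of $\Delta(B)$, and the explicit expansion (\ref{equ16}).

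For the integral representation I would read $\Delta(B)$ as the relative (fiber) entropy of $\aY^{(\K)}$ over the factor $\aY$ under the measure-preserving map $\Upsilon^{(\K)}$ of (\ref{equ12}). The fiber over a typical orbit $Y=(Y_n: n\in\ZZ)$ is the set of label sequences $(B^{Y_{n-1},Y_n}_n: n\in\ZZ)$, which by the construction (\ref{equ7}) are independent Bernoulli variables with law $\aP_Y$; the Abramov--Rokhlin formula (invoked through \cite{dos}) then gives $\Delta(B)=\int H_{\aP_Y}(B_1^{Y_0,Y_1})\,d\PP(Y)$. Since under $\PP$ the pair $(Y_0,Y_1)$ has law $\mu(i)Q(i,j)$, and by (\ref{equ14}) one has $H_{\aP_Y}(B_1^{Y_0,Y_1})=-H(B^{Y_0,Y_1})$, the integral collapses to $-\sum_{(i,j)\in I^2}\mu(i)Q(i,j)H(B^{i,j})$, which is the stated sum after relabeling of indices (the right-hand sum in (\ref{equ15}) should carry $H(B^{i,j})$ rather than $H(B^{j,k})$).

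To obtain (\ref{equ16}) I would substitute $\theta_{i,j}=P(i,j)/Q(i,j)$ and $\oth_{i,j}=P(i,\E)\mu(j)/Q(i,j)$ from (\ref{equ7}), so that $Q(i,j)\theta_{i,j}=P(i,j)$ and $Q(i,j)\oth_{i,j}=P(i,\E)\mu(j)$. Starting from $h(\aY^{(\K)})=h(\aY)+\Delta(B)$ and using $\theta_{i,j}+\oth_{i,j}=1$ to write $\log Q(i,j)=(\theta_{i,j}+\oth_{i,j})\log Q(i,j)$, each $(i,j)$-summand regroups as $-\mu(i)[Q(i,j)\theta_{i,j}\log(Q(i,j)\theta_{i,j})+Q(i,j)\oth_{i,j}\log(Q(i,j)\oth_{i,j})]$, which by the substitutions above equals $-\mu(i)[P(i,j)\log P(i,j)+P(i,\E)\mu(j)\log(P(i,\E)\mu(j))]$. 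Splitting $\log(P(i,\E)\mu(j))=\log P(i,\E)+\log\mu(j)$ and summing over $(i,j)$, the $\log P(i,\E)$ contribution gives $-\sum_{i\in I}\mu(i)P(i,\E)\log P(i,\E)$ via $\sum_j\mu(j)=1$, while the $\log\mu(j)$ contribution gives $-(1-\gamma)\sum_{j\in I}\mu(j)\log\mu(j)$ via $\sum_{i\in I}\mu(i)P(i,\E)=\pi(\E)=1-\gamma$ from (\ref{equ4}); together with the surviving term $-\sum_{i,j\in I}\mu(i)P(i,j)\log P(i,j)$ this reproduces (\ref{equ16}).

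The only genuinely conceptual step is the fiber-entropy identification in the second paragraph: one must confirm that $\Upsilon^{(\K)}$ is measure-preserving (already noted below (\ref{equ12})), that the fiber measure over $Y$ is precisely the product Bernoulli family $\aP_Y$, and that the Abramov--Rokhlin formula applies so as to reduce the relative entropy to a single-coordinate conditional entropy integrated against $\PP$. Everything else is bookkeeping with the function $x\log x$ and the normalizations already recorded in (\ref{equ4}), so I expect no analytic difficulty there, only care with the index relabeling.
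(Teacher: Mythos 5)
Your proposal is correct and takes essentially the same route as the paper: the first identity is read off from (\ref{equ13}), the integral form of $\Delta(B)$ is obtained by identifying the fibers of $\Upsilon^{(\K)}$ with the independent Bernoulli labels and invoking the Abramov--Rokhlin formula of \cite{dos} together with (\ref{equ14}) and the Markov property, and (\ref{equ16}) follows from the substitutions $Q(i,j)\theta_{i,j}=P(i,j)$, $Q(i,j)\oth_{i,j}=P(i,\E)\mu(j)$ with the normalizations $\sum_{j}\mu(j)=1$ and $\sum_{i}\mu(i)P(i,\E)=1-\gamma$. Your remark that the summand in (\ref{equ15}) should read $H(B^{i,j})$ rather than $H(B^{j,k})$ correctly flags a typo in the paper's statement.
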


\begin{proof}
From (\ref{equ13}) and (\ref{equ14}), and by using the Markov property, 
ones retrieves
the Abramov-Rokhlin formula, see \cite{ar} and \cite{dos},
$$
\Delta(B)=h(\aY^{(\K)})-h(\aY)=\int H_{\aP_{Y}}(B_1^{Y_0,Y_1}) d\PP(Y)
=\sum_{i,j\in I} \mu(i) Q(i,j)H(B^{i,j}).
$$
This gives (\ref{equ15}).
The only thing left to prove is (\ref{equ16}). By using
$$
\sum_{i\in \E}\mu(i)P(i,\E)\!=\!1\!-\!\gamma,\,
\sum_{j\in I}P(i,j)\!=\!1\!-\!P(i,\E),\,
\sum_{i\in I}\mu(i) P(i,j)\!=\!\gamma \mu(j),
$$
and (\ref{equx1}) one gets,
\begin{eqnarray*}
\Delta(B)&=&
-\sum_{i,j\in I} \mu(i) \left(P(i,\E)\mu(j)\log(P(i,\E)\mu(j))+
P(i,j)\log P(i,j)\right)\\
&{}& \; +\sum_{i,j\in I} \mu(i) Q(i,j)\log Q(i,j)\\
&=&-\sum_{i\in I} \mu(i) P(i,\E) \log P(i,\E)-
(1-\gamma) \sum_{j\in I} \mu(j)\log \mu(j)\\
&{}& \; +\sum_{i,j\in I} \mu(i) P(i,j)\log P(i,j)-h(\aY).
\end{eqnarray*}
This shows (\ref{equ16}).
\end{proof}

\begin{remark}
\label{rmk4}
From (\ref{equ11}) we get that there are in mean
$\sum_{i\in I}\mu(i)P(i,j)=\gamma$ sites in $\ZZ$
where $\aY^{(\K)}$ makes a transition 
with label $0$, and a mean $\sum_{i\in I}\mu(i)P(i,\E)=
1-\gamma$ of sites in $\ZZ$ where $\aY^{(\K)}$ makes a transition 
with label $1$. $\Box$
\end{remark}

\subsection{The absorbed chain}
\label{ssa}

Let us construct a stationary representation of the absorbed 
chain in a similar way as  
we did for the killed chain. Define $\E^*=\E\cup \{o\}$ with
$o\not\in \E\cup I$. The absorbed chain will take values on the
set of states
$I^2\times \E^*=\{(i,j,\delta): i\in I, j\in I, \delta\in \E^*\}$.
The matrix $\A=(\A((i,j,\delta), (l,k,\epsilon)):
(i,j,\delta), (l,k,\epsilon))\in I^2\times \E$ defined by
$$
\A((i,j,\delta), (l,k,\epsilon)=
\begin{cases}
&0 \hbox{ if } l\neq j;\\
&Q(j,k)\theta_{j,k}=P(j,k) \hbox{ if } l=j, \epsilon=o;\\
&Q(j,k) \oth_{j,k} P(j,\epsilon)/P(j,\E)\!=\!P(j,\epsilon) \mu(k)
\hbox{ if } l\!=\!j, \epsilon\in \E;
\end{cases}
$$
is an stochastic matrix whose stationary distribution
$\eta=(\eta(i,j,\delta): (i,j,r)\in I^2\times \E^*)$ is given by
\begin{eqnarray*}
\eta(i,j,\delta)&=&
\mu(i)Q(i,j)\left(\theta_{i,j} {\bf 1}(\delta=o)+
\oth_{i,j} P(i,\delta)/P(i,\E) {\bf 1}(\delta\in \E)\right)\\
&=& \mu(i) P(i,j){\bf 1}(\delta=o)+\mu(i)P(i,\delta)\mu(j)
{\bf 1}(\delta\in \E).
\end{eqnarray*}
In fact, since $\sum_{\delta\in \E^*}(\theta_{i,l} {\bf 1}(\delta=o)+
\oth_{i,l} P(i,\delta)/P(i,\E) {\bf 1}(\delta\in \E))=1$ one gets the 
stationarity property,
\begin{eqnarray*}
&{}&
\sum_{(i,j,\delta)\in I^2\times \E^*} \eta(i,j,\delta)
\A((i,j,\delta), (l,k,\epsilon))\\
&{}& \, =
(\sum_{i\in I} \mu(i) Q(i,l))
Q(l,k)(\theta_{l,k}{\bf 1}(\epsilon=o)+\oth_{l,k} P(l,\epsilon)/P(l,\E)
{\bf 1}(\epsilon\in \E))\\ 
&{}& \, =
\mu(l) Q(l,k) (\theta_{l,k}{\bf 1}(\epsilon=o)+\oth_{l,k}
P(l,\epsilon)/P(l,\E) {\bf 1}(\epsilon\in \E))
=\eta(l,k,\epsilon).
\end{eqnarray*}   

We note by $\aY^{(\A)}=((Y_{n-1},Y_n, D^*_n): n\in \ZZ)$
the absorbed Markov chain presented in its stationary form,
and taking values in $I^2\times \E^*$ with transition matrix $\A$.
Since $\A$ is irreducible the Markov shift $\aY^{(\A)}$ is ergodic.

\medskip

It is straightforward to check that the mapping
\begin{eqnarray}
\nonumber
&{}& 
\Upsilon^{(\A)}: \aY^{(\A)}\to \aY^{(\K)},(((Y_{n-1},Y_n,D^*_n): n\in \ZZ))
\to (Y_{n-1},Y_n,B_n): n\in \ZZ)\\
\label{equ17}
&{}& \hbox{ with } B_n= {\bf 1}(D^*_n=o)\, ,
\end{eqnarray}
is a measure preserving factor between
$\aY^{(\A)}$ and $\aY^{(\K)}$.

\medskip

The orbit $((Y_{n-1},Y_{n},D^*_n): n\in \ZZ)$
in $\aY^{(\A)}$ can be written by $((Y_n,D^*_n): n\in \ZZ)$. 

\begin{remark}
\label{rmk5}
Let us see the stationary chain $\aY^{(\A)}$ models the absorbed Markov 
chain.
First note ${\cal N}^*=\{n\in \ZZ: D^*_n\in \E\}$ and write it by
${\cal N}^*=\{n_l: l\in \ZZ\}$ with $n_l$ is increasing in $l$ 
and $n_{-1}<0\le n_0$. We have $\PP(0\in {\cal N}^*)=\pi(\E)$.

\medskip

Similarly as we proceed in Remark \ref{rmk2}, an 
orbit $((Y_n,D^*_n): n\in \ZZ)\in \aY^{\A}$ is partitioned 
into the disjoint connected pieces 
$$
(Y,D^*)^{(a)}_{l}=((Y_{n_l},o),..,(Y_{n_{l+1}-2},o), 
(Y_{n_{l+1}-1}, D^*_{n_{l+1}-1})) \hbox{ with } l\in \ZZ\,.
$$ 
The component $Y_{n_{l}}$ is distributed with law $\mu$
for all $l$,
and one can identify $(Y,D^*)^{(a)}_l$ with 
$Y^{(a)}_l=(Y_{n_l},..,Y_{n_{l+1}-2}, D^*_{n_{l+1}-1})$
staring from $\mu$.
Since the events $\{n\in \ZZ: D^*_n\in \E\}$ has the same 
distribution as $\{n\in \ZZ: B_n=0\}$ in $\aY^{(\K)}$, 
one checks that
$$
\forall l\neq 0: \quad Y^{(a)}_{l}\sim \yX^{(a)},
$$
where $\yX^{(a)}$ starts form $\mu$.
In fact, for $s\ge 0$, $i_0,..,i_s\in I$, $\epsilon\in \E$, one has,
$$
\PP(\yX^{(a)}=(i_0,..,i_s, \epsilon))\!=\!
\mu(i_0)\left(\prod_{r=0}^{s-1}
P(i_r,i_{r+1})\right) 
P(i_s,\epsilon)\!=\!\PP(Y_{l}^{(a)}=(i_0,..,i_s,\epsilon)). 
$$
Let  $s\ge 0$, $(i_0,..,i_s)\in I^{s+1}$, $\epsilon\in \E$.
For almost all the orbits  $Y\in \aY^{(\A)}$ and for all $l\in \ZZ$, 
$l\neq 0$, one has
$$
\PP(0\in {\cal N}^*, Y_{l}^{(a)}=(i_0,..,i_l,\epsilon))=
\mu(i_0)\prod_{r=0}^l P(i_r,i_{r+1}) P(i_l,\epsilon)>0.  
$$
Since each absorbed trajectory is finite, the class of absorbed trajectories    
is countable. Then, form the Ergodic Theorem and since 
$\aY^{(\A)}$ is ergodic we get that $\PP-$a.e. the orbits of $\aY^{(\A)}$
contain all the absorbed trajectories of the chain. $\Box$
\end{remark}

The entropy of the absorbed chain is
\begin{eqnarray*}
&{}& \!\!\! h(\aY^{(\A)})\\
&=&-\sum_{(i,j)\in I^2}\!\!\!\! \mu(i) Q(i,j) \sum_{k\in I} \!
Q(j,k) \theta_{j,k} \log (Q(j,k)\theta_{j,k})\\
&{}& -\! \sum_{(i,j)\in I^2}\!\!\!\! \mu(i) Q(i,j) \!\!\!
\sum_{k\in I,\epsilon\in \E}\!\!\!\!
Q(j,k) \oth_{j,k} P(j,\epsilon)/P(j,\E) \log
(Q(j,k)\oth_{j,k}P(j,\epsilon)/P(j,\E))\\
&=& -\sum_{(j,k)\in I^2}\!\! \mu(j) Q(j,k) \log Q(j,k)
-\sum_{j\in I}\mu(j)Q(j,k)
(\theta_{j,k} \log \theta_{j,k}+\oth_{j,k} \log \oth_{j,k})\\
&{}& -\sum_{(j,k)\in I^2}\!\! \mu(j) Q(j,k)\oth_{j,k}
\sum_{\epsilon\in \E} P(j,\epsilon)/P(j,\E) \log (P(j,\epsilon)/P(j,\E)).
\end{eqnarray*}
Then,
\begin{eqnarray}
\nonumber
&{}&h(\aY^{(\A)})=h(\aY^{(\K)})+\sum_{i\in I}\mu(i) P(i,\E) H(D^i)\,, 
\hbox{ where }\\
\label{equ18}
&{}& H(D^i)=-\sum_{\delta\in \E}P(i,\delta)/P(i,\E) \log (P(i,\delta)/P(i,\E))
\end{eqnarray}
is the entropy of a random variable in $\E$ distributed as the
transition probability from $i\in I$ to an state conditioned to be in $\E$.
Note that the above expression can be also written
$$
h(\aY^{(\A)})=h(\aY^{(\K)})+\sum_{i\in I}\mu(i) (P(i,\E) H(D^i)+
P(i,\I))H(o)),
$$
being $H(o)=0$ the entropy of a constant.

\medskip

Define
$$
\Delta(D)=h(\aY^{(\A)})-h(\aY^{(\K)})=\sum_{i\in I}\mu(i) P(i,\E) H(D^i).
$$
This is the conditional entropy of
$\aY^{(\A)}$ given the factor $\aY^{(\K)}$. 
Let $Y^{(\K)}=((Y_{n-1},Y_{n},B_n): n\in \ZZ)$ be an orbit 
of $\aY^{(\K)}$. The fiber given by (\ref{equ17}) satisfies 
$(\Upsilon^{(\A)})^{-1}\{Y^{(\K)}\}=
\{(D^{Y_n, B_n}_n: n\in \ZZ)\in (\E^*)^\ZZ\}$ 
with $D^{Y_n, B_n}_n\in \E$ when $B_n=0$ 
and $D^{Y_n, B_n}_n=o$ when $B_n=1$. These variables
are independent distributed as a Bernoulli $D^{Y_n}_n$ given in 
(\ref{equ10}) if $B_n=0$ and the constant variable $o$ if 
$B_n=1$. This probability 
measure is noted by $\aP_{Y^{(\K)}}$. Thus, 
\begin{equation}
\label{equ19}
H_{\aP_{Y^{(\K)}}}(D^{Y_0,B_0}_0)
=
\begin{cases}
& \!\! -\sum_{\delta\in \E} P(Y_0,\delta)/P(Y_0,\E) \log 
(P(Y_0,\delta)/P(Y_0,\E)) \hbox{ if } B_0=0,\\
& \!\! 0 \hbox{ if } B_0=1.
\end{cases}
\end{equation}

\begin{proposition}
\label{prop30}
The entropy of the stationary representation $\aY^{\A}$ of the absorbed 
chain satisfies
\begin{eqnarray*}
&{}& h(\aY^{(\A)})=h(\aY^{(\K)})+\Delta(D) \hbox{ with }\\
&{}&\Delta(D)=\int H_{\aP_{Y}}(D^{Y_0,B_0}_0) d\PP(Y^{(\K)})=-
\sum_{i\in I}\mu(i) P(i,\E) H(D^i);  
\end{eqnarray*}
and
\begin{eqnarray}
\nonumber 
 h(\aY^{(\A)}) &=&-(1-\gamma)\sum_{j\in I}
\mu(j)\log \mu(j)-\sum_{i,j\in I} \mu(i) P(i,j)\log P(i,j)\\
\label{equ20}
&{}&\; -\sum_{i\in I, \delta\in \E}\mu(i) P(i,\delta) \log
P(i,\delta).
\end{eqnarray}
\end{proposition}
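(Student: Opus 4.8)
The plan is to follow the same argument used for Proposition~\ref{prop3}, since $\aY^{(\A)}$ sits over $\aY^{(\K)}$ through the factor $\Upsilon^{(\A)}$ of (\ref{equ17}) in exactly the way $\aY^{(\K)}$ sits over $\aY$ through $\Upsilon^{(\K)}$. The decomposition $h(\aY^{(\A)})=h(\aY^{(\K)})+\Delta(D)$ with $\Delta(D)=\sum_{i\in I}\mu(i)P(i,\E)H(D^i)$ is already established in the computation leading to (\ref{equ18}), so the first task is only to recognize $\Delta(D)$ as the conditional entropy of $\aY^{(\A)}$ over the factor $\aY^{(\K)}$. For this I would invoke the Abramov--Rokhlin formula (see \cite{ar} and \cite{dos}) on the factor $\Upsilon^{(\A)}$: over a fixed orbit $Y^{(\K)}=((Y_{n-1},Y_n,B_n):n\in\ZZ)$ the fiber consists of the independent variables $(D^{Y_n,B_n}_n:n\in\ZZ)$, whose single--site entropy is given by (\ref{equ19}). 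By stationarity and the Markov property, integrating this single--site entropy against $\PP$ reproduces $\Delta(D)=\int H_{\aP_{Y^{(\K)}}}(D^{Y_0,B_0}_0)\,d\PP(Y^{(\K)})$; since $H_{\aP_{Y^{(\K)}}}(D^{Y_0,B_0}_0)$ vanishes on $\{B_0=1\}$ and equals $H(D^{Y_0})$ on $\{B_0=0\}$, while the event $\{Y_0=i,\,B_0=0\}$ has probability $\mu(i)P(i,\E)$, the integral collapses to $\sum_{i\in I}\mu(i)P(i,\E)H(D^i)$. This proves the first assertion.

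The second task is to derive the closed form (\ref{equ20}). Here I would simply substitute the explicit expression (\ref{equ16}) for $h(\aY^{(\K)})$ into $h(\aY^{(\A)})=h(\aY^{(\K)})+\Delta(D)$ and expand $\Delta(D)$. Writing $\log(P(i,\delta)/P(i,\E))=\log P(i,\delta)-\log P(i,\E)$ inside $H(D^i)$ yields
\[
\Delta(D)=-\sum_{i\in I,\,\delta\in\E}\mu(i)P(i,\delta)\log P(i,\delta)
+\sum_{i\in I}\mu(i)P(i,\E)\log P(i,\E),
\]
where the identity $\sum_{\delta\in\E}P(i,\delta)=P(i,\E)$ collapses the second piece. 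The term $+\sum_{i\in I}\mu(i)P(i,\E)\log P(i,\E)$ cancels exactly the term $-\sum_{i\in I}\mu(i)P(i,\E)\log P(i,\E)$ appearing in (\ref{equ16}), and the three sums that survive are precisely those of (\ref{equ20}).

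The computation involves no genuine difficulty: the only point that must be handled with care is the cancellation of the $P(i,\E)\log P(i,\E)$ contribution, which is what makes the absorption entropy depend on the full values $P(i,\delta)$ rather than on the conditional probabilities $P(i,\delta)/P(i,\E)$. The identification of $\Delta(D)$ as an Abramov--Rokhlin conditional entropy is the structurally meaningful step, but it is formally identical to the one already carried out for $\Delta(B)$ in the proof of Proposition~\ref{prop3}, so no new machinery is needed.
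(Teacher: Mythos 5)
Your proposal is correct and follows essentially the same route as the paper: identify $\Delta(D)$ as the fiber (Abramov--Rokhlin) conditional entropy over the factor $\Upsilon^{(\A)}$ using (\ref{equ19}) and the Markov property, then substitute (\ref{equ16}) and cancel the $\sum_{i\in I}\mu(i)P(i,\E)\log P(i,\E)$ terms to reach (\ref{equ20}). Note that your sign convention $\Delta(D)=\sum_{i\in I}\mu(i)P(i,\E)H(D^i)$ (with $H(D^i)\ge 0$ as defined in (\ref{equ18})) is the consistent one, matching the paper's own computation in its proof; the minus sign in the proposition's displayed formula for $\Delta(D)$ is a typo in the statement.
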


\begin{proof}
From (\ref{equ18}) and (\ref{equ19}) and by using the Markov property 
one gets 
\begin{eqnarray*}
\Delta(D)&=&\int H_{\aP_{Y^{(\A)}}}(D^{Y_0,B_0}_0) d\PP(Y^{(\K)})\\
&=&-\sum_{i\in I}\mu(i)P(i,\E) 
\sum_{\delta\in \E} P(i,\delta)/P(i,\E) \log (P(i,\delta)/P(i,\E))\\
&=& -\sum_{i\in I,\delta\in \E} \mu(i) P(i,\delta) \log
P(i,\delta)+\sum_{i\in I} \mu(i) P(i,\E) \log P(i,\E).
\end{eqnarray*}
Now, one uses (\ref{equ16}) to get expression (\ref{equ20}).
\end{proof}

\begin{remark}
\label{rmk7}
Let $\yX^{(a)}$ be a trajectory of an absorbed
chain, starting from distribution $\mu$ in $I$ and finishing after
it hits $\E$. It has length $\tau_\E$ and it corresponds to an
absorbed trajectory of length $\tau_\E-1$ in the chain 
$\aY^{\A}$ with alphabet $I^2\times \E^*$. In fact if 
$\yX^{(a)}=(X_1,..,X_l,\epsilon)$ with $X_1,..,X_l\in I$, 
$\epsilon\in \E$, is an absorbed trajectory of length $l+1$, 
then the associated trajectory in $\aY^{\A}$ is given
by the trajectories 
$Y^{(a)}=((X_r,X_{r+1},o), r=1,..,l-1; (X_{l},j^*,\epsilon))$
of length $l$.
Here $j^*\in I$ is an element chosen with distribution $\mu$ and
it is the starting state of the next absorbed trajectory. $\Box$
\end{remark}

\subsection{Entropy balance}

The associated stationary chain $\axX$ with transition kernel 
$\xP=P^\pi$ is retrieved from 
the stationary chain $\aY$ with transition kernel $Q$,
a collection of Bernoulli variables $\aB^{I,I}$ 
that assign $0$ or $1$ between the
connections of $\aY$, a set of Bernoulli variables 
$\aD^I$ giving the transition 
from $I$ to $\E$, and a family of walks $\aV$ whose components
are Bernoulli variables $(G_n)$ 
distributed as $\pi(\bullet \, | \, \E)$. The length of these 
walks is Geometric$(\gamma)-1$ distributed, and so they could be empty. 

\medskip

It is straightforward to check the following equality, relating
$h(\axX)$ given by (\ref{equ21}) to the entropies of the
elements forming the chain $\axX$. 

\begin{proposition}
\label{prop4}
We have
$$
h(\axX)=\pi(I) h(\aY^{(\A)})+\pi(\E)^2 h(\aG)
+\pi(I)\pi(\E) \log \pi(I) +\pi(\E)^2 \log \pi(\E). 
$$
$\Box$
\end{proposition}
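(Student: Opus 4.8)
The plan is to prove the identity by direct substitution: expand each of the three entropies appearing in the statement into closed form in terms of $\mu$, $P$ and $\gamma$, and then match the resulting sums. The left-hand side $h(\axX)$ is already expanded in (\ref{equ21}); the first summand $h(\aY^{(\A)})$ on the right is available in closed form from (\ref{equ20}); and the second, $h(\aG)$, is the entropy rate of an i.i.d.\ (``Bernoulli'') sequence with one-dimensional marginal $\pi(\bullet\,|\,\E)$, so that $h(\aG)=-\sum_{\delta\in\E}\pi(\delta\,|\,\E)\log\pi(\delta\,|\,\E)$. Throughout I would use the elementary relations $\pi(i)=\gamma\mu(i)$ for $i\in I$, $\pi(I)=\gamma$, $\pi(\E)=1-\gamma$ and $\pi(\delta\,|\,\E)=\pi(\delta)/\pi(\E)$, together with $\sum_{i\in I}\mu(i)=1$, $\sum_{i,j\in I}\mu(i)P(i,j)=\gamma$ and $\sum_{i\in I}\mu(i)P(i,\E)=1-\gamma$.

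First I would substitute $\pi(i)=\gamma\mu(i)$ into (\ref{equ21}) and split each logarithm as $\log(\gamma\mu(i))=\log\gamma+\log\mu(i)$; this exhibits $h(\axX)$ as a combination of four types of sums, namely $\sum_{i}\mu(i)\log\mu(i)$, $\sum_{i,j\in I}\mu(i)P(i,j)\log P(i,j)$, $\sum_{i\in I,\delta\in\E}\mu(i)P(i,\delta)\log P(i,\delta)$ and $\sum_{\delta\in\E}\pi(\delta)\log\pi(\delta)$, plus pure constants carrying $\log\gamma$ and $\log(1-\gamma)$. Next I would expand $\pi(I)\,h(\aY^{(\A)})$ directly from (\ref{equ20}) and $\pi(\E)^2\,h(\aG)$ from the marginal formula above, again peeling the constant $\log(1-\gamma)$ out of $\log\pi(\delta\,|\,\E)$ and using $\sum_{\delta\in\E}\pi(\delta)=\pi(\E)$. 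The four sum-types then coincide term-for-term on the two sides, with respective coefficients $(1-\gamma)\gamma$, $\gamma$, $\gamma$ and $(1-\gamma)$, so this part is genuinely routine.

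The hard part will be the bookkeeping of the leftover constant terms, those proportional to $\log\pi(I)=\log\gamma$ and $\log\pi(\E)=\log(1-\gamma)$: on the left they come only from the block $-\pi(\E)\sum_{i}\pi(i)\log\pi(i)$ of (\ref{equ21}), while on the right they are assembled from the displayed constants together with the $\log(1-\gamma)$ piece extracted from $h(\aG)$, and getting their coefficients and signs to agree is the whole arithmetic content of the proposition. To guard against a slip here I would run a second, conceptual derivation as a check: viewing $\axX$ as a random time change over the cycle process built from $\aY$, the labels $\aB^{I,I}$, the absorbing choices $\aD^I$, the geometric lengths $\aTa=(\Ta_n)$ and the walks $\aV$, Abramov's formula gives $h(\axX)=\pi(I)\,h(\mathrm{cycle})$, because each cycle carries exactly one $I$-site and so has mean length $\pi(I)^{-1}$ (cf.\ (\ref{equ6})). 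Decomposing $h(\mathrm{cycle})=h(\aY^{(\A)})+\pi(\E)\,H(\Ta_n)+\pi(\E)\,\EE(\Ta_n)\,h(\aG)$ reproduces the coefficient $\pi(\E)^2$ of $h(\aG)$, since $\pi(I)\,\pi(\E)\,\EE(\Ta_n)=\pi(\E)^2$, and identifies the constant terms as $\pi(I)\pi(\E)\,H(\Ta_n)$, where $H(\Ta_n)=-\log\pi(I)-\tfrac{\pi(\E)}{\pi(I)}\log\pi(\E)$ is the (nonnegative) entropy of a $\mathrm{Geometric}(\gamma)-1$ variable; reconciling this nonnegative contribution with the constant terms exactly as displayed is the one step I would verify with the most care.
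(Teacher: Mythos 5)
Your plan is the same as the paper's own verification: expand $h(\axX)$ from (\ref{equ21}), $\pi(I)h(\aY^{(\A)})$ from (\ref{equ20}), and $\pi(\E)^2 h(\aG)$ from the i.i.d.\ marginal formula, then match terms. Your ``routine'' part is indeed correct: the four sum-types match with exactly the coefficients you list. But the step you deferred --- the bookkeeping of the constant terms --- does not close, and it cannot, because the identity as displayed is false: the two logarithmic terms must enter with \emph{minus} signs. Concretely, after substituting $\pi(i)=\gamma\mu(i)$, the only constant on the left side comes from $-\pi(\E)\sum_{i\in I}\pi(i)\log\pi(i)$ and equals $-\pi(I)\pi(\E)\log\pi(I)$, while the right side contributes $\pi(\E)^2\log\pi(\E)$ (peeled out of $h(\aG)$, cf.\ (\ref{equ22})) plus the two displayed constants; equating the two sides forces $-\pi(I)\log\pi(I)=\pi(\E)\log\pi(\E)$, which is impossible since the left member is positive and the right one negative. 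Your Abramov cross-check detects precisely this: it yields the correction $\pi(I)\pi(\E)H(\Ta_n)=-\pi(I)\pi(\E)\log\pi(I)-\pi(\E)^2\log\pi(\E)\ge 0$, the \emph{negative} of the displayed constants $\pi(I)\pi(\E)\log\pi(I)+\pi(\E)^2\log\pi(\E)\le 0$, so the ``reconciliation'' you flagged as the step to verify with most care is not delicate but impossible. Both of your arguments, carried to the end, prove instead
$$
h(\axX)=\pi(I)\, h(\aY^{(\A)})+\pi(\E)^2 h(\aG)
-\pi(I)\pi(\E)\log \pi(I)-\pi(\E)^2\log \pi(\E).
$$

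A one-state example settles the sign: take $I=\{1\}$, $\E=\{\partial\}$, $P(1,1)=P(1,\partial)=1/2$. Then $\axX$ is an i.i.d.\ fair coin, so $h(\axX)=\log 2$, while $h(\aY^{(\A)})=\log 2$, $h(\aG)=0$, $\pi(I)=\pi(\E)=1/2$; the right-hand side as displayed equals $\tfrac12\log 2+\tfrac14\log\tfrac12+\tfrac14\log\tfrac12=0$, whereas the minus-sign version gives $\tfrac12\log 2+\tfrac14\log 2+\tfrac14\log 2=\log 2$, as it should. Note that the paper is in the same position: its intermediate computations ((\ref{equ22}) and the expansion of $\pi(I)h(\aY^{(\A)})$) are correct, but its final assembly repeats the sign slip of the statement of Proposition \ref{prop4}. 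So your method is the right one (and your Abramov argument is the cleaner of the two, since it explains the constants as $\pi(I)\pi(\E)$ times the entropy of the geometric excursion length rather than as unexplained leftovers), but the proposal is incomplete exactly where the content lies; finishing it honestly ends in correcting the stated identity, not proving it.
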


Below we discuss the way this equality appears.
We have reduced the elements forming the chain $\axX$ to only two, 
the absorbed chains $\aY^{(\A)}$ and the walks $\aV$ with 
Bernoulli variables $G_n$. From (\ref{equ20}) we have
\begin{eqnarray*}
h(\aY^{(\A)})&=&-\pi(\E)\sum_{j\in I}
\mu(j)\log \mu(j)- \sum_{i,j\in I} \mu(i) P(i,j)\log P(i,j)\\
&{}& \, -\sum_{i\in I, \delta\in \E}\!\!\!\! \mu(i) P(i,\delta) \log
P(i,\delta),
\end{eqnarray*}
and the Bernoulli sequence $\aG=(G_n)$  has entropy
$$
h(\aG)=-\sum_{\delta\in \E} \pi(\delta \, | \, \E) \log
\pi(\delta \, | \, \E)=-\pi(\E)^{-1}
\sum_{\delta\in \E} \pi(\delta) \log \pi(\delta)+\log \pi(\E).
$$
Take big $N$, we divide the
sequence $(X_1,..,X_N)$ into the set of absorbed chains
$\xX^{(a)}$ and the set of nonempty walks $V$ in $\E$.
The proportion of elements in $I$ approaches $\pi(I)$ 
as $N\to \infty$. On the other hand, from (\ref{equ6})  
one obtains that for every time $t\in \tT$ there are in mean
$\sum_{i\in I} \mu(i) P(i,\E) (\pi(I)^{-1}-1)$ points belonging
to a walk in $\E$. Since,
$$
\sum_{i\in I} \mu(i) P(i,\E) (\pi(I)^{-1}-1)=\pi(\E)^2,
$$
one finds that the proportion of sites in $(X_1,..,X_N)$ with symbols 
in $\aG$ arising from a walk $V$ in $\E$ approaches to 
$\pi(\E)^2$ as $N\to \infty$. We have
\begin{equation}
\label{equ22}
\pi(\E)^2 h(\aG)=-\pi(\E) \sum_{\delta\in \E} \pi(\delta) \log \pi(\delta)
+\pi(\E)^2\log \pi(\E).
\end{equation}   

Let us compute $\pi(I) h(\aY^{(\A)})$. 
Since $\mu(i)=\pi(i)/\pi(I)$ for $i\in I$, 
one has
$$
-\pi(I)\sum_{j\in I} \mu(j)\log \mu(j)=
-\sum_{j\in I} \pi(j)\log \pi(j)+\pi(I)\log \pi(I),
$$
and so by using (\ref{equ20}) one obtains,
\begin{eqnarray*}
\pi(I) h(\aY^{(\A)})&=&
-\pi(\E)\sum_{j\in I}\! \pi(j)\log \pi(j)+\pi(\E) \pi(I)\log \pi(I)\\
&{}&\;\; -\sum_{i,j\in I}\! \pi(i) P(i,j)\log P(i,j)-\sum_{i\in I, \delta\in \E} 
\!\! \pi(i) P(i,\delta) \log P(i,\delta).
\end{eqnarray*}
Then, one has checked the equality given in Proposition \ref{prop4},
$$
h(\axX)-(\pi(\E)^2 h(\aG)+\pi(\E)^2 \log \pi(\E))=\pi(I)h(\aY^{(\A)})+
\pi(\E) \pi(I)\log \pi(I).
$$

The term $\pi(I)\pi(\E) \log \pi(I)$ has an origin similar 
to the one of last term in (\ref{equ22}). In fact, from
Remark \ref{rmk4}, the weights $\mu(j), j\in I$, appear with frequency 
$\pi(\E)$ in the sequence $\aY$ because this occurs 
at the sites where is a jump to $\E$. Since 
the sequence $\aY$ appears
with frequency $\pi(I)$, then with frequency $\pi(I)\pi(\E)$
it will appear the term $-\sum_{i\in I}\mu(i)\log \mu(i)$.
Hence, as in (\ref{equ22}) one has 
$$
-\pi(I)\pi(\E)\sum_{i\in I}\mu(i)\log \mu(i)=
-\pi(\E)\sum_{i\in I}\pi(i)\log \pi(i)
+\pi(I)\pi(\E) \log \pi(I),
$$
and since $-\pi(\E)\sum_{i\in I}\pi(i)\log \pi(i)$ 
is the term present in (\ref{equ21}), it appears 
the extra term $\pi(I)\pi(\E) \log \pi(I)$.

\begin{remark}
\label{rmk8}
From Remark \ref{rmk7} the length of an absorbed 
trajectory in $\aY^{\A}$ with alphabet $I^2\times \E^*$,  
has the same length as the number of elements in $I$ 
of an absorbed trajectory $\xX^{(a)}$   
starting from $\mu$ and absorbed when hitting $\E$ 
(this is of length $|\xX^{(a)}|-1$ which counts the visited 
sites in $I$, but not the one containing the absorbing state). 
Since the entropy of
a system is the gain of entropy per unit of time,
the proportion of symbols given the entropy $h(\aY^{(\A)})$ is 
$\pi(I)$.
This explains why it appears the term 
$\pi(I) h(\aY^{(\A)})$. $\Box$
\end{remark}

\section{Appendix: Proof of Proposition \ref{prop2}}

Since only a shift is needed to construct the stationary process 
$\hS^s$ from $\hS$, we get for any $ a,b\in I\cup \E$,
$$
\PP(\hS^s_{t+1}=b \, | \, \hS^s_{t}=a)=\PP(\hS_{t+1}=b \, | \,
\hS_{t}=a).
$$
Or, more generally, for any $b\in I\cup \E$ and any sequence of symbols
$a(u\le t)=(a(u)\in I\cup \E: u\le t)$, one has
$$
\PP(\hS^s_{t+1}=b \, | \, \hS_{u}=a(u), u\le t)=\PP(\hS_{t+1}=b
\, | \, \hS_{u}=a(u), u\le t).
$$
Since there is regeneration at the times in 
$\tT=\{t\in \ZZ: \hS_t\in I\}$, one gets
\begin{equation}
\label{equ23}
\PP(\hS^s_{t+1}=b \, | \, \hS^s_{u}=a(u), u\le t)=
\PP(\hS^s_{t+1}=b \, | \, \hS^s_{u}=a(u) , u=t,..,t-r)
\end{equation}
being $r\ge 0$ the first nonnegative element such that $a(t-r)\in I$.

\medskip

Let us compute $\PP(t\in \tT)$. It suffices to calculate $\PP(t\not\in
T)/\PP(t\in \tT)$. From (\ref{equ6}) one gets that for every
time $t\in \tT$ there are in mean
$$
\sum_{i\in I}\mu(i)
\sum_{j\in I}Q(i,j) \theta_{i,j} \pi(I)^{-1}
=\sum_{i\in I} \mu(i) P(i,\E) \pi(I)^{-1}
$$
points in $\ZZ\setminus \tT$. Then, from (\ref{equ4}) one finds
$$
\PP(t\not\in \tT)/\PP(t\in \tT)=\sum_{i\in I} \mu(i) P(i,\E) \pi(I)^{-1}
= \pi(\E)/\pi(I).
$$
We conclude
\begin{equation}
\label{equ24}
\PP(t\in \tT)=\pi(I) \hbox{ and } \PP(t\not\in \tT)=\pi(\E).
\end{equation}
Since $(\hS_{T_l}: l\in \ZZ)$ is equally distributed as
$(Y_n: n\in \ZZ)$ one gets for $i\in I$,
$$
\PP(\hS^s_t=i \, | \, \hS^s_t\in I)=
\PP(\hS_t=i \, | \, \hS_t\in I)=\mu(i),
$$
and so, by using (\ref{equ24}) one gets  
\begin{equation}
\label{equ25}
\PP(\hS^s_t=i)=\mu(i) \PP(\hS_t\in I)=\mu(i)\pi(I)=\pi(i).
\end{equation}  
Let $i,j\in I$, from definition of $\theta_{i,j}$ in (\ref{equ7})
and since $\oth_{i,j}=1-\theta_{i,j}$ one obtains
\begin{eqnarray}
\label{equ26}
\PP(\hS^s_{t+1}=j \, | \, \hS^s_t=i)&=& \oth_{i,j}Q(i,j)\\
\nonumber
&=&
\oth_{i,j}(P(i,j)+P(i,\E)\mu(j))=P(i,j).
\end{eqnarray}  
We have $\sum_{j\in I}\PP(\hS^s_{t+1}=j \, | \, \hS^s_t=i)  
=1-P(i,\E)$, so
$$
\PP(\hS^s_{t+1}\in \E \, | \, \hS^s_t=i)=P(i,\E).
$$
Then, when $\hS^s_t=i$ jumps to $\E$ it does it with
probability $P(i,\E)$, and the jump to some particular state
$\delta\in \E$ is done with probability
\begin{equation}
\label{equ27}
\PP(\hS^s_{t+1}=\delta \, | \, \hS^s_t=i)=
P(i,\E)P(i,\delta)/P(i,\E)=P(i,\delta).
\end{equation}
For $\delta\in \E$, $\epsilon\in \E$ one has
$\xP(\epsilon,\delta)=\pi(\delta)$, then
$$
\PP(\hS^s_{t+1}=\epsilon \, | \, \hS^s_t=\delta)=
\pi(\epsilon \, | \, \E)\PP_\delta(\tau_I>1)=
\pi(\epsilon \, | \, \E)\pi(\E)=\pi(\epsilon).
$$
Then, for $\delta\in \E$ one finds
\begin{eqnarray*}
\PP(\hS^s_t=\delta)&=&\sum_{i\in I}\PP(\hS^s_{t-1}=i, \hS^s_t=\delta)
+\sum_{\epsilon\in \E}\PP(\hS^s_{t-1}=\epsilon, \hS^s_t=\delta)\\
&=&\sum_{i\in I}\PP(\hS^s_t=\delta \, | \, \hS^s_{t-1}=i)\pi(i)+
\sum_{\epsilon\in \E}\PP(\hS^s_t=\delta \, | \, \hS^s_{t-1}=\epsilon)
\PP(\hS^s_{t-1}=\epsilon)\\
&=&\sum_{i\in I}P(i,\delta)\pi(i)+\pi(\delta)\sum_{\epsilon\in \E}
\PP(\hS^s_{t-1}=\epsilon)\\
&=&\sum_{i\in I}P(i,\delta)\pi(i)+
\pi(\delta)\pi(\E)=\pi(\delta)(\pi(I)+\pi(\E))=\pi(\delta)\,.
\end{eqnarray*}
By definition of the process $\hS$, for $\epsilon\in \E$ one gets
\begin{equation}
\label{equ28}
\PP(\hS^s_{t+1}=\epsilon \, | \, \hS^s_t=\delta, \hS^s_{u}=a(u), u<t)
=\PP(\hS^s_{t+1}=\epsilon \, | \, \hS^s_t=\delta)
=\pi(\epsilon).
\end{equation}
Now, let us compute $\PP(\hS^s_t=\epsilon, \hS^s_{t+1}=j)$
for $\epsilon\in \E$, $j\in I$. We necessarily have that
this sequence has its origin in some $(Y_s=i, Y_{s+1}=j)$
and $B^{i,j}_{s+1}=0$, for some $i\in I$. Then, by summing over  
all the states $i\in I$ and all pieces of trajectories in $\E$
that are built between $i$ and $j$, and by using (\ref{equ27})   
and (\ref{equ5}) one gets
\begin{eqnarray*}
&{}&\PP(\hS^s_t=\epsilon, \hS^s_{t+1}=j)\\
&{}& = \sum_{i\in I} \sum_{l\ge 1} \PP(\hS_{t-l}=i;
\hS_{t-u}\in \E, 1\le u< l; \hS^s_t=\epsilon, \hS^s_{t+1}=j)\\
&=&
\sum_{i\in I} \mu(i) \mu(j)  P(i,\epsilon) \pi(I)+
\! \sum_{i\in I}\mu(i) \mu(j) 
\!\!\!\! \sum_{l\ge 1; \delta_1,..,\delta_l\in \E} 
\!\!
\left(\! P(i,\delta_1)\prod_{k=2}^{l-1}
\pi(\delta_k)\! \right)\! \pi(\epsilon)\pi(I)\\
&=&\pi(\epsilon)\pi(I)\mu(j)=\pi(\epsilon)\pi(j).
\end{eqnarray*}
From (\ref{equ25}) and (\ref{equ28}) the bivariate marginals of
the stationary chains $\axX=(\xX_n)$ and ${\bf{\hS}^s}=(\hS^s_n)$
are the same. Now we turn to prove that ${\bf{\hS}^s}$ satisfies
the Markov property. In view of the regeneration
property (\ref{equ23}), this will be proven once we show
\begin{equation}
\label{equ29}
\PP(\hS^s_{t+1}=b \, | \, \hS^s_{u}=a(u) , u=t,..,t-r)=
\PP(\xX_{t+1}=b \, | \, \xX_{u}=a(t))
\end{equation}
where $r\ge 0$ and satisfies $a(t-r)\in I$, $a(t-u)\in \E$
for $u=1,..,r-1$. This was shown for the case $r=0$ in
(\ref{equ26}) and (\ref{equ27}). On the other hand (\ref{equ28})
proves (\ref{equ29}) in the case $b\in \E$ and $r>0$. So, the
unique case left to show is $b\in I$ and $r>0$.

\medskip

Let $i,j\in I$, $r>0$, $\delta_u\in \E$ for $u=0,..,r-1$. Since
$$
\PP(\xX_{t+1}=j \, | \, \xX_{t-u}=\delta_0)=\pi(j),
$$
to achieve the proof of (\ref{equ29}), the unique relation
that we are left to show is
$$
\PP(\hS^s_{t+1}=j \, | \, \hS^s_{t-u}=\delta_u, u=0,..,r-1,
\hS^s_{t-r}=i)=\pi(j).
$$
Let us prove it. One has
\begin{eqnarray*}
&{}&
\PP(\hS^s_{t+1}=j, \hS^s_{t-u}=\delta_u, u=0,..,r-1, \hS^s_{t-r}=i)\\
&=&
\mu(i)(P(i,j)+P(i,\E)\mu(j))\theta_{i,j}\frac{P(i,\delta_0}{P(i,\E)}
\left(\prod_{u=1}^{r-1}\pi(\delta_l)\right)\pi(I)\\
&=&
\mu(i)P(i,\E)\mu(j)\frac{P(i,\delta_0}{P(i,\E)}
\left(\prod_{u=1}^{r-1}\pi(\delta_l)\right)\pi(I)\\
&=&\mu(i)P(i,\delta_0)\mu(j)
\left(\prod_{u=1}^{r-1}\pi(\delta_l)\right)\pi(I), 
\end{eqnarray*}
and
\begin{eqnarray*}
\PP(\hS^s_t=i, \hS^s_{t-u}=\delta_u: u=0,..,r-1)
&=& \left(\sum_{j\in I}\mu(i)P(i,\delta_0)\mu(j)\right)
\left(\prod_{u=1}^{r-1}\pi(\delta_l)\right)\\
&=&\mu(i)P(i,\delta_0)
\left(\prod_{u=1}^{r-1}\pi(\delta_l)\right).
\end{eqnarray*}
Therefore,
$$  
\PP(\hS^s_{t+1}=j\, | \, \hS^s_{t-u}=\delta_u, u=0,..,r-1, \hS^s_{t-r}=i)
=\mu(j)\pi(I)=\pi(j).
$$
Then (\ref{equ29}) follows. We have proven that the laws of the
stationary chains $\axX=(\xX_n)$ and ${\bf{\hS}^s}=(\hS^s_n)$ are the same.

\bigskip

\noindent \textbf{Acknowledgments:} This work was supported by the Basal
Conicyt project AFB170001. The author thanks Dr. Michael Schraudner from
CMM, University of Chile,  
for calling my attention to reference \cite{dos}.

\medskip

\noindent SERVET MART\'INEZ

\noindent {\it Departamento Ingenier{\'\i}a Matem\'atica and Centro
Modelamiento Matem\'atico, Universidad de Chile,
UMI 2807 CNRS, Casilla 170-3, Correo 3, Santiago, Chile.}
e-mail: smartine@dim.uchile.cl


\begin{thebibliography}{99}

\bibitem{ar} L. Abramov, V. Rokhlin. 
The entropy of a skew product of measure-
preserving transformations. 
Vestnik Leningrad. Univ. 17 (1962), 5–13 (in Russian).

\bibitem{as} S. Asmussen.
Applied probability and queues. Second edition. Applications of 
Mathematics (New York), 51. Stochastic Modelling and Applied 
Probability. Springer-Verlag, New York (2003).

\bibitem{cmsm}
P. Collet, S. Mart\'inez, J. San Mart\'in. Quasi-stationary
distributions. Markov chains, diffusions and dynamical systems.   
Probability and its Applications (New York). Springer, Heidelberg
(2013).

\bibitem{das} J. Darroch, E. Seneta. 
On quasi-stationary distributions in absorbing 
discrete-time finite Markov chains. 
J. Appl. Probab. 2 (1965), 88–100.

\bibitem{dgs} M. Denker, C. Grillenberger, K Sigmund. Ergodic 
theory on compact spaces. Lecture Notes in Mathematics, Vol. 527. 
Springer-Verlag, Berlin-New York
(1976). 

\bibitem{dos} T. Downarowicz, J. Serafin. Fiber entropy and 
conditional variational principles in compact non-metrizable spaces. 
Fund. Math. 172 (2002), No. 3, 217–247. 

\bibitem{fmp}
P.A. Ferrari, S. Mart\'inez, P. Picco. 
Existence of Non-Trivial Quasi-Stationary 
Distributions in the Birth-Death Chain. 
Adv. Appl. Probab. 24 (1992), No. 4, pp. 795-813

\bibitem{fkmp}
P.A. Ferrari, H. Kesten, S. Mart\'inez, P. Picco. Existence of 
quasi-stationary distributions. A renewal dynamical approach. Ann. 
Probab. 23 (1995), No. 2, 501-521.

\bibitem{ks}
M. Keane, M. Smorodinsky. Finitary isomorphisms of irreducible
Markov shifts. Israel J. of Math. 34 (1979), 
no. 4, 281-286.

\end{thebibliography}
\end{document}